\documentclass[11pt,reqno]{amsart}

\usepackage{amsmath,amssymb,amsthm}
\usepackage[margin=1in]{geometry}

\newtheorem{theorem}{Theorem}[section]
\newtheorem{lemma}[theorem]{Lemma}
\newtheorem{corollary}[theorem]{Corollary}

\theoremstyle{definition}
\newtheorem{definition}[theorem]{Definition}
\newtheorem{example}[theorem]{Example}
\newtheorem{remark}[theorem]{Remark}

\newcommand{\D}{\mathbb{D}}
\newcommand{\C}{\mathbb{C}}
\newcommand{\N}{\mathbb{N}}
\newcommand{\Real}{\operatorname{Re}}

\begin{document}

\title[A Zalcman--Pang type rescaling result\dots]
{A Zalcman--Pang type rescaling result for $\varphi$-normal harmonic mappings and applications}

\author{Kuldeep Singh Charak}
\author{Pratiksha}
\author{Nikhil Bharti$^*$}
\thanks{$^*$ Corresponding author: Nikhil Bharti}

\subjclass[2020]{Primary 30D45, 31A05; Secondary 30G30, 30H05.}

\keywords{Normal functions; $\varphi$-normal functions; normal harmonic mappings;
$\varphi$-normal harmonic mappings; spherical derivative.}

\begin{abstract}
We investigate normality and $\varphi$-normality criteria for harmonic mappings in the
unit disk. A new sufficient condition for normality involving extended spherical
derivatives is established. We further prove a Zalcman--Pang type rescaling lemma for
$\varphi$-normal harmonic mappings and derive new $\varphi$-normality criteria as
applications. In addition, we introduce $\varphi$-normal families of harmonic mappings
and obtain corresponding Lappan-type characterizations. In particular, we show that for
sense-preserving harmonic mappings the relevant test set may be taken to consist
of only three points.
\end{abstract}

\maketitle

\section{Introduction and main results}

The theory of normal functions and normal families occupies a central position in
geometric function theory and complex analysis. Originating in the work of Montel
\cite{Montel}, normality has become one of the most fundamental compactness principles in
the study of analytic and meromorphic functions. A family of meromorphic functions
defined in a domain $D\subseteq\C$ is said to be \emph{normal} if every sequence in the
family contains a subsequence which converges locally uniformly, with respect to the
spherical metric, to a meromorphic function or to the point at infinity. One of the most
important characterizations of normality is Marty's theorem \cite[Theorem~4]{Marty}, which
asserts that a family of meromorphic functions is normal if and only if the corresponding
spherical derivatives are locally uniformly bounded. Recall that the spherical derivative
of a meromorphic function $f$ is
\begin{equation}\label{eq:sphmero}
f^{\#}(z)=\frac{|f'(z)|}{1+|f(z)|^{2}}.
\end{equation}

Closely related to Marty's criterion is the notion of a \emph{normal function}, introduced
implicitly by Yosida \cite{Yosida} and Noshiro \cite{Noshiro}, and later formalized by
Lehto and Virtanen \cite{LehtoVirtanen}. A meromorphic function on the unit disk $\D$ is
called normal if its post-composition with conformal automorphisms of $\D$ forms a normal
family. This admits the following equivalent quantitative characterization.

\begin{definition}\label{def:normalmero}
A meromorphic function $f$ in the unit disk $\D=\{z\in\C:|z|<1\}$ is called \emph{normal} if
\[
\sup_{z\in\D}\,(1-|z|^{2})\,f^{\#}(z)<\infty.
\]
\end{definition}

In recent decades considerable efforts have been devoted to extending classical results from
analytic and meromorphic function theory to harmonic mappings driven by the enormous applicability potential of harmonic mappings (see \cite{ACA, OCM}). Recall that a complex-valued harmonic mapping defined in a simply connected domain $D\subseteq\C$ admits a canonical
decomposition $f=h+\bar g$, where $h$ and $g$ are analytic in $D$ (see \cite{Duren}).
Throughout this paper we use the decomposition $f=h+\bar g$.

Motivated by the work of Colonna on Bloch harmonic mappings \cite{Colonna}, Arbel\'aez,
Hern\'andez and Sierra \cite{AHS} introduced the notion of normality for harmonic mappings.

\begin{definition}\label{def:normalharm}
A harmonic mapping $f=h+\bar g$ in $\D$ is said to be \emph{normal} in $\D$ if it satisfies
the Lipschitz-type condition
\[
\sup_{\substack{z,w\in\D\\ z\neq w}}\frac{\chi(f(z),f(w))}{\rho_{h}(z,w)}<\infty,
\]
where $\chi(f(z),f(w))$ is the chordal distance, i.e.\ the Euclidean distance between the
stereographic projections of $f(z)$ and $f(w)$ on the Riemann sphere,
\[
\chi(f(z),f(w))=
\begin{cases}
\dfrac{|f(z)-f(w)|}{\sqrt{1+|f(z)|^{2}}\,\sqrt{1+|f(w)|^{2}}}, & f(z),f(w)\in\C,\\[2.2ex]
\dfrac{1}{\sqrt{1+|f(z)|^{2}}}, & f(w)=\infty,\\[2.2ex]
0, & f(z)=f(w)=\infty,
\end{cases}
\]
and $\rho_{h}(z,w)$ is the hyperbolic distance between $z$ and $w$,
\[
\rho_{h}(z,w)=\frac12\log\!\left(\frac{1+t}{1-t}\right),\qquad
t=\left|\frac{z-w}{1-\bar w z}\right|.
\]
\end{definition}

The following equivalent characterization is often more convenient.

\begin{definition}[{\cite[Proposition~2.1]{AHS}}]\label{def:normalharmeq}
A harmonic mapping $f=h+\bar g$ in $\D$ is \emph{normal} in $\D$ if
\begin{equation}\label{eq:normalharmeq}
\sup_{z\in\D}\,(1-|z|^{2})\,\frac{|h'(z)|+|g'(z)|}{1+|f(z)|^{2}}<\infty.
\end{equation}
\end{definition}

The theory of normal harmonic mappings has attracted significant attention. Deng,
Ponnusamy and Qiao \cite{DPQ} established harmonic analogues of several classical theorems
on normal meromorphic functions, including versions of Marty's theorem and the
Lohwater--Pommerenke rescaling theorem. Bharti and Thin \cite{BT} established a
Zalcman--Pang type rescaling result in the same setting. Following \cite{BT}, the
\emph{spherical derivative} of a harmonic mapping $f=h+\bar g$ in $\D$ is
\[
f^{\#}(z)=\frac{|h'(z)|+|g'(z)|}{1+|f(z)|^{2}},
\]
and, for $k\in\N$, the \emph{extended spherical derivative of order $k$} is
\begin{equation}\label{eq:extsph}
f^{\#(k)}(z):=\frac{|h^{(k)}(z)|+|g^{(k)}(z)|}{1+|f(z)|^{k+1}}.
\end{equation}

Motivated by the generalized Marty-type theorem of Li and Xie \cite{LiXie} and by the
normality criteria of Chen, Nevo and Pang \cite{CNP} involving lower bounds of differential
expressions, we first establish a new criterion for a harmonic mapping to be normal.

\begin{theorem}\label{thm:lowerbound}
Let $k$ be a positive integer and let $M>0$. Suppose that $f=h+\bar g$ is a non-constant
harmonic mapping in $\D$ satisfying
\begin{equation}\label{eq:lowerbound}
f^{\#(k)}(z)>M,\qquad z\in\D.
\end{equation}
Then $f$ is a normal harmonic mapping.
\end{theorem}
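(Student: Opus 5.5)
The plan is to argue by contradiction through a rescaling argument, in the spirit of the proof of the Chen--Nevo--Pang criterion \cite{CNP}. Suppose $f=h+\bar g$ satisfies \eqref{eq:lowerbound} but is not normal. By the harmonic Lohwater--Pommerenke/Zalcman--Pang machinery (\cite{DPQ}, \cite{BT}) there are points $z_n\in\D$, radii $\rho_n\to0$ with $\rho_n/(1-|z_n|)\to0$, and an exponent $\alpha$ such that
\[
g_n(\zeta)=\rho_n^{-\alpha}f(z_n+\rho_n\zeta)=\rho_n^{-\alpha}h(z_n+\rho_n\zeta)+\overline{\rho_n^{-\alpha}g(z_n+\rho_n\zeta)}
\]
converges locally uniformly (spherically) on $\C$ to a non-constant harmonic mapping $G=H+\bar K$ with bounded spherical derivative.

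The key computation is how \eqref{eq:lowerbound} transforms under this rescaling. Since
\[
|h_n^{(k)}|+|g_n^{(k)}|=\rho_n^{k-\alpha}\bigl(|h^{(k)}|+|g^{(k)}|\bigr)
\quad\text{and}\quad
1+|f|^{k+1}=1+\rho_n^{\alpha(k+1)}|g_n|^{k+1},
\]
hypothesis \eqref{eq:lowerbound} gives
\[
|h_n^{(k)}(\zeta)|+|g_n^{(k)}(\zeta)|>M\rho_n^{k-\alpha}+M\rho_n^{k(1+\alpha)}|g_n(\zeta)|^{k+1}.
\]
The exponent that makes the second term scale-invariant is $\alpha=-1$. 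This suggests passing, in the limit, to the inequality
\[
|H^{(k)}|+|K^{(k)}|\ge M|G|^{k+1}
\]
on $\C$, away from the poles of $G$.

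To avoid the forbidden endpoint $\alpha=-1$ of the Zalcman--Pang lemma, I would first take $\alpha=0$ (the plain Lohwater--Pommerenke rescaling \cite{DPQ}) and use the resulting information on where $|f(z_n)|$ is large. The intended case split is as follows.
\begin{itemize}
\item If $f(z_n)$ stays bounded, then \eqref{eq:lowerbound} gives $|h^{(k)}|+|g^{(k)}|>M$ near $z_n$. I would combine this with the Cauchy estimates for the bounded limit on small disks to contradict $\rho_n\to0$ only together with a normalization at the scale $\rho_n$. This step has to be checked carefully, since the direct bound $M\rho_n^k\to0$ gives nothing by itself.
\item If $f(z_n)\to\infty$, I would work with the rescaled mapping $\rho_n f(z_n+\rho_n\zeta)$ near such points and use the growth $|f|^{k+1}$ to force $|h^{(k)}|+|g^{(k)}|$ to be enormous. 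This should yield a limit $G$ satisfying the scale-invariant inequality above.
\end{itemize}
The final contradiction is then to be derived from the limit inequality, which resembles the differential inequality in \cite{LiXie}. For a non-constant harmonic $G$ on $\C$ with bounded spherical derivative (hence of finite order), I would aim to show that the analytic parts cannot satisfy $|H^{(k)}|+|K^{(k)}|\ge M|H+\bar K|^{k+1}$ everywhere. The intended route is to integrate along rays from a point where $G$ is large, or to apply a Liouville-type argument to $1/G$ near poles.

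\textbf{Main obstacle.} I expect the rescaling step to be the hardest part. The natural exponent is the endpoint $\alpha=-1$, where the Zalcman--Pang lemma of \cite{BT} is not available in general. The two-stage argument above (first $\alpha=0$ to locate where $|f(z_n)|$ is large, then a hand-made rescaling adapted to $|f|$ there) is my attempt to circumvent this. It is the part most likely to need modification, possibly by exploiting a uniform lower bound on $|h^{(k)}|+|g^{(k)}|$ to control the multiplicities that normally obstruct the $\alpha=-1$ case.
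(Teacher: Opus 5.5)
\textbf{There is a genuine gap: your proposal never reaches a contradiction.}

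\begin{itemize}
\item The $\alpha=0$ branch extracts nothing from \eqref{eq:lowerbound}. As you note yourself, the resulting bound $M\rho_n^{k}$ tends to $0$.
\item The $\alpha=-1$ rescaling in the second branch is never constructed.
\item The concluding claim, that no non-constant harmonic $G=H+\overline{K}$ on $\C$ satisfies $|H^{(k)}|+|K^{(k)}|\ge M|G|^{k+1}$, is left unproved.
\item The references to poles of $G$ and to $1/G$ are misplaced, since these harmonic mappings are finite-valued.
\end{itemize}

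Relative to the paper, the missing idea is that you do not want scale invariance; you want to overshoot it. Take your own inequality $|h_n^{(k)}|+|g_n^{(k)}|>M\rho_n^{k-\alpha}+M\rho_n^{k(1+\alpha)}|g_n|^{k+1}$. Any $\alpha<-1$ gives $\rho_n^{k(1+\alpha)}\to\infty$. So at a point $\zeta_0$ with $G(\zeta_0)\neq0$ the right-hand side blows up, while the left-hand side converges to $|H^{(k)}(\zeta_0)|+|K^{(k)}(\zeta_0)|<\infty$. This is the paper's entire proof. It applies Lemma~\ref{lem:resnormal} with $\beta=2$, i.e.\ $\alpha=-2$. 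That lemma is stated for every $\beta\in(-1,\infty)$ in $\rho_n^{\beta}f(w_n+\rho_n\eta)$, which is every $\alpha<1$ in your notation. The meromorphic restriction $\alpha>-1$ comes from poles, which harmonic mappings do not have.

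\textbf{Your unease about negative $\alpha$ is nonetheless justified, though not because of poles.} For a single mapping on $\D$, a locally uniform limit on all of $\C$ forces $\rho_n/(1-|w_n|)\to0$. Suppose $(1-|z|)^{3}(|h'(z)|+|g'(z)|)\le C$ on $\D$. Then the maps $F_n=\rho_n^{2}f(w_n+\rho_n\eta)=H_n+\overline{G_n}$ satisfy $|H_n'|+|G_n'|\le 8C\big(\rho_n/(1-|w_n|)\big)^{3}\to0$ on each disc $|\eta|\le R$. So no $\beta=2$ rescaling of such a map can have a non-constant limit.

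Non-normal maps of this kind exist. Take $h(z)=\sum_k n_kz^{n_k}$ with $n_{k+1}\ge n_k^{2}$, and $g\equiv0$.
\begin{itemize}
\item $(1-|z|)^{2}|h'(z)|$ is bounded on $\D$.
\item Rouch\'e's theorem on the circles $|z|=1-1/n_k$ and $|z|=1-\log(8n_k/n_{k-1})/n_k$ produces zeros $z_0$ of $h$ between them.
\item At these zeros, $(1-|z_0|)|h'(z_0)|\ge n_{k-1}/2\to\infty$, so $h$ is not normal.
\end{itemize}

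Hence Lemma~\ref{lem:resnormal}, as stated, fails at $\beta=2$, and the paper's short argument inherits this gap. To turn either your sketch or the paper's argument into a proof, you would need to extract a rescaling with exponent beyond the scale-invariant one from \eqref{eq:lowerbound} itself, or argue differently.
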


Choosing $k=1$ in Theorem~\ref{thm:lowerbound} recovers \cite[Theorem~1.4]{BT}. Moreover
Theorem~\ref{thm:lowerbound} may be regarded as a harmonic analogue, for higher-order
derivatives, of the result of Grahl and Nevo \cite{GrahlNevo}.

We next study weighted versions of normality. Our work is motivated by Aulaskari and
R\"atty\"a \cite{AR}, who introduced $\varphi$-normal meromorphic functions. A function
$\varphi\colon[0,1)\to(0,\infty)$ is said to be \emph{smoothly increasing} if
\[
\varphi(r)(1-r)\longrightarrow\infty\quad(r\to1^{-}),\qquad
\frac{\varphi\!\left(\left|a+\dfrac{z}{\varphi(|a|)}\right|\right)}{\varphi(|a|)}\longrightarrow 1
\quad(|a|\to1^{-}),
\]
the latter converges uniformly on compact subsets of $\C$. A typical example is
$\varphi(r)=(1-r)^{-\alpha}$, $\alpha\in(1,\infty)$.

If $\varphi$ is smoothly increasing then, without loss of generality, we shall assume
throughout that
\begin{equation}\label{eq:phinorm}
 \varphi(r)(1-r)\ge 1,\qquad 0\le r<1.
\end{equation}
Indeed, replacing $\varphi$ by $\max\{\varphi,1\}$ changes neither the smoothly increasing
property nor the notion of $\varphi$-normality below, since the two functions agree near
the boundary, where the relevant estimates take place.

Given such a $\varphi$, a meromorphic function in $\D$ is \emph{$\varphi$-normal} if
\[
\sup_{z\in\D}\frac{f^{\#}(z)}{\varphi(|z|)}<\infty.
\]
Following \cite{AR}, Bohra, Datt and Pal \cite{BDP} extended $\varphi$-normality to planar
harmonic mappings: a harmonic mapping $f=h+\bar g$ in $\D$ is \emph{$\varphi$-normal} if
\[
\sup_{z\in\D}\frac{f^{\#}(z)}{\varphi(|z|)}<\infty,
\]
where $f^{\#}$ is as in \eqref{eq:extsph} with $k=1$.

The theory of $\varphi$-normal harmonic mappings is largely unexplored. Our central tool is
the following Zalcman--Pang type rescaling lemma. Only the implication stated is required in
the sequel; see Remark~\ref{rem:converse} for a discussion of the converse.

\begin{theorem}\label{thm:rescale}
Let $\varphi\colon[0,1)\to(0,\infty)$ be a continuous smoothly increasing function and let
$\beta\in(-1,\infty)$. If a non-constant harmonic mapping $f=h+\bar g$ in $\D$ is
\emph{not} $\varphi$-normal, then there exist sequences $\{w_{n}\}\subset\D$ and
$\{\rho_{n}\}\subset(0,1)$, with $\rho_{n}\to0$ as $n\to\infty$, such that the functions
\[
F_{n}(\eta)=\left(\frac{\rho_{n}}{\varphi(|w_{n}|)}\right)^{\beta}
f\!\left(w_{n}+\frac{\rho_{n}\eta}{\varphi(|w_{n}|)}\right)
\]
converge locally uniformly in $\C$ to a non-constant harmonic mapping $F$ satisfying
$F^{\#}(\eta)\le F^{\#}(0)=1$.
\end{theorem}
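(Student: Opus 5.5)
The plan is to reduce to a local non-normality statement near boundary points, rescaled by $\varphi$, and then run a Pang--Zalcman maximization argument directly on the rescaled functions. Since $f$ is not $\varphi$-normal, there are points $z_n\in\D$ with
\[
\frac{f^{\#}(z_n)}{\varphi(|z_n|)}\longrightarrow\infty .
\]
Because $f^{\#}$ is continuous and bounded on compact subsets of $\D$, necessarily $|z_n|\to1$. Fix $R>0$. Since $\varphi(r)(1-r)\to\infty$, for large $n$ the disk $\{|z-z_n|<R/\varphi(|z_n|)\}$ lies in $\D$. Set
\[
f_n(\zeta)=f\!\left(z_n+\frac{\zeta}{\varphi(|z_n|)}\right),\qquad |\zeta|<R .
\]
Then $f_n=h_n+\overline{g_n}$, with $h_n'(\zeta)=h'(\cdot)/\varphi(|z_n|)$, so $f_n^{\#}(0)=f^{\#}(z_n)/\varphi(|z_n|)\to\infty$. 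By the uniform convergence $\varphi(|z_n+\zeta/\varphi(|z_n|)|)/\varphi(|z_n|)\to1$ on $|\zeta|\le R$, the weight $\varphi$ is essentially constant on these disks. This is what allows one to move back and forth between $f_n$ and $f$ at the end, and to express the final centres $w_n$ and radii $\rho_n$ in terms of $\varphi(|w_n|)$ rather than $\varphi(|z_n|)$.

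Next comes the Pang-type maximization with the exponent $\beta$. For $0<t\le 1$, consider the rescalings $t^{\beta}f_n(\zeta_0+t\eta)$ on $|\zeta_0|<1$. Their spherical derivative at $\eta=0$ is
\[
\frac{t^{1+\beta}\bigl(|h_n'(\zeta_0)|+|g_n'(\zeta_0)|\bigr)}{1+t^{2\beta}|f_n(\zeta_0)|^{2}} .
\]
Following Pang, for each $\zeta_0$ with $|\zeta_0|\le 1/2$ I would choose $t=t_n(\zeta_0)$ so that this quantity equals $1$; monotonicity in $t$ is where $\beta>-1$ enters. Then I would pick $\zeta_n$ that (almost) minimizes $t_n(\zeta_0)/(1-2|\zeta_0|)$ over $|\zeta_0|\le 1/2$. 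Since $f_n^{\#}(0)\to\infty$ forces $t_n(0)\to0$, we obtain $\rho_n:=t_n(\zeta_n)\to0$. Extremality then gives the comparison $t_n(\zeta_n+\rho_n\eta)\ge \rho_n(1-o(1))$ for bounded $\eta$. Finally, set
\[
w_n=z_n+\frac{\zeta_n}{\varphi(|z_n|)},
\]
and absorb the factor $\varphi(|z_n|)/\varphi(|w_n|)\to1$ into $\rho_n$. This yields $F_n$ of the stated form, with $F_n^{\#}(0)=1$ and $F_n^{\#}(\eta)\le 1+o(1)$ locally uniformly in $\C$.

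The step I expect to be the main obstacle is passing to a harmonic limit. A bound on $F_n^{\#}$ gives only spherical equicontinuity, so the $F_n$ might tend to $\infty$ locally. I would exclude this as follows. Since $F_n^{\#}(0)=1$ and $F_n$ is finite-valued, if $|F_n(0)|\to\infty$ then the normalization forces $|H_n'(0)|+|G_n'(0)|\sim|F_n(0)|^{2}$. I would try to contradict this using the choice of $t$ in the maximization, which should keep $t^{\beta}|f_n(\zeta_0)|$ bounded; if necessary I would modify the functional to $\frac{t^{1+\beta}(|h'|+|g'|)}{1+t^{2\beta}|f|^{2}}$ with the extremal $t$ also controlling $t^{\beta}|f|$. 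Once $F_n$ is locally uniformly bounded, write $F_n=H_n+\overline{G_n}$ with $G_n(0)=0$. Then $\Real(H_n+G_n)$ and $\operatorname{Im}(H_n-G_n)$ are locally bounded harmonic functions, and Harnack/Montel arguments give locally bounded analytic $H_n,G_n$. A subsequence then converges locally uniformly to $H,G$, and $F=H+\overline{G}$ is harmonic. Derivatives converge as well, so $F^{\#}(\eta)\le F^{\#}(0)=1$, and the latter makes $F$ non-constant.
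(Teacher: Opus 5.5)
There is a genuine gap, and it sits at the end, where you ``absorb'' the change of variables into $\rho_n$. Your normalized functions are
\[
G_n(\eta)=\rho_n^{\beta}f_n(\zeta_n+\rho_n\eta)=\rho_n^{\beta}\,f\Bigl(w_n+\frac{\rho_n\eta}{\varphi(|z_n|)}\Bigr).
\]
So the prefactor is the $\beta$-th power of the scale in the $\zeta$-variable. The theorem instead requires the $\beta$-th power of the scale $\rho_n/\varphi(|w_n|)$ in the $z$-variable. Put $\tilde\rho_n=\rho_n\varphi(|w_n|)/\varphi(|z_n|)$ to match the argument. Then the theorem's function is exactly $F_n=\varphi(|z_n|)^{-\beta}G_n$, and $\varphi(|z_n|)\to\infty$ because $|z_n|\to1$. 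Hence for $\beta\neq0$ the normalization $G_n^{\#}(0)=1$ says nothing about $F_n$: if $G_n\to G\not\equiv0$, then $F_n\to0$ when $\beta>0$, and $F_n$ blows up where $G\neq0$ when $\beta<0$. Granting the limit step, your construction proves only the case $\beta=0$, or a variant with prefactor $\rho_n^{\beta}$ in place of $(\rho_n/\varphi(|w_n|))^{\beta}$. For $\beta=0$ it is a legitimate localized alternative to the paper's global maximization of $\Gamma_n$ with the varying weight $\varphi(|w|/r_n)$. A minor point: the monotonicity in $t$ you invoke holds only for $\beta\le1$. The logarithmic $t$-derivative of $t^{1+\beta}A/(1+t^{2\beta}B)$ has numerator $(1+\beta)+(1-\beta)t^{2\beta}B$, which can be negative for $\beta>1$; taking the smallest admissible $t$ avoids this.

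The mismatch cannot be repaired by normalizing the correctly scaled quantity $c^{1+\beta}(|h'|+|g'|)/(1+c^{2\beta}|f|^{2})$ with $c=t/\varphi$. That is what the paper's $\Gamma_n$ does, but at $t=1$ Claim~1 only gives the lower bound $f^{\#}(w)/\varphi^{1+|\beta|}$. Non-$\varphi$-normality controls only $f^{\#}/\varphi$, so the passage from \eqref{eq:p2} to \eqref{eq:p4} is unjustified for $\beta\neq0$.

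In fact the statement is false for $\beta\neq0$. Let $\varphi(r)=(1-r)^{-2}$ and $P(z)=(1-z)^{-3/2}$.

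For $\beta<0$, take $f=i-\operatorname{Im}P$, so $h=i+\tfrac{i}{2}P$ and $g=\tfrac{i}{2}P$. On $(0,1)$ one has $f(r)=i$ and $f^{\#}(r)/\varphi(r)=\tfrac34(1-r)^{-1/2}\to\infty$, so $f$ is not $\varphi$-normal. But $|f|\ge1$, so for any choice of $w_n,\rho_n$ we get $|F_n|\ge\rho_n^{\beta}\to\infty$.

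For $\beta\ge\tfrac34$, take $f=-\operatorname{Im}P$, which is again not $\varphi$-normal. Since $|f(z)|\le(1-|z|)^{-3/2}$, we get $|F_n(\eta)|\le2^{3/2}\rho_n^{\beta}(1-|w_n|)^{2\beta-3/2}\to0$ locally uniformly, so every limit is constant. For $0<\beta<\tfrac34$ the same argument works with $\varphi(r)=(1-r)^{-\gamma}$, $1<\gamma<1/(1-\beta)$, and $f=-\operatorname{Im}(1-z)^{-a}$, $\gamma-1<a\le\gamma\beta$.

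Finally, your plan for excluding $F_n\to\infty$ is not yet an argument. The mapping $F(\eta)=M+i(1+M^{2})\Real\eta$ satisfies $F^{\#}\le F^{\#}(0)=1$ for every $M>0$, so the normalization alone cannot keep $|F_n(0)|$ bounded. The paper's proof does not address this point either; it only cites Lemma~\ref{lem:marty}.
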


As a consequence we obtain the following sufficient condition for $\varphi$-normality.

\begin{theorem}\label{thm:phisuff}
Let $k$ be a positive integer and let $\alpha>1$, $M>0$. Let
$\varphi\colon[0,1)\to(0,\infty)$ be a continuous smoothly increasing function. Suppose that
$f=h+\bar g$ is a non-constant harmonic mapping in $\D$ satisfying
\begin{equation}\label{eq:phisuff}
\frac{|h^{(k)}(z)|+|g^{(k)}(z)|}{1+|f(z)|^{\alpha}}>\frac{M}{\varphi(|z|)^{\alpha-1}},
\qquad z\in\D.
\end{equation}
Then $f$ is a $\varphi$-normal harmonic mapping in $\D$.
\end{theorem}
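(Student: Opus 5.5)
We argue by contradiction, using Theorem~\ref{thm:rescale} with the exponent $\beta$ chosen so that the hypothesis \eqref{eq:phisuff} blows up along the rescaled sequence. Suppose $f$ is not $\varphi$-normal. Fix $\beta\in(-1,\infty)$, to be specified below. Theorem~\ref{thm:rescale} then gives $w_n\in\D$ and $\rho_n\to0$ such that
\[
F_n(\eta)=\Bigl(\tfrac{\rho_n}{\varphi_n}\Bigr)^{\beta} f(z_n(\eta)),\qquad \varphi_n:=\varphi(|w_n|),\quad z_n(\eta):=w_n+\tfrac{\rho_n\eta}{\varphi_n},
\]
converges locally uniformly on $\C$ to a non-constant harmonic mapping $F=H+\overline{G}$. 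We write $F_n=H_n+\overline{G_n}$, where
\[
H_n(\eta)=(\rho_n/\varphi_n)^{\beta}h(z_n(\eta)),\qquad G_n(\eta)=(\rho_n/\varphi_n)^{\beta}g(z_n(\eta)).
\]

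\textbf{Convergence of derivatives.} Since $\partial F_n$ and $\bar\partial F_n$ are derivatives of a locally uniformly convergent sequence of harmonic functions, they converge locally uniformly. Therefore $H_n^{(k)}=\partial^{k}F_n\to H^{(k)}$ and $\overline{G_n^{(k)}}=\bar\partial^{k}F_n\to\overline{G^{(k)}}$ locally uniformly. Consequently $|H_n^{(k)}|+|G_n^{(k)}|$ stays bounded on compact subsets of $\C$.

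\textbf{Transferring the hypothesis.} The chain rule gives
\[
H_n^{(k)}(\eta)=(\rho_n/\varphi_n)^{\beta+k}h^{(k)}(z_n),
\]
and similarly for $G_n^{(k)}$. We drop the $1$ in the numerator $1+|f|^{\alpha}$ and substitute $|f(z_n)|^{\alpha}=(\rho_n/\varphi_n)^{-\alpha\beta}|F_n(\eta)|^{\alpha}$. Then \eqref{eq:phisuff} yields
\[
|H_n^{(k)}(\eta)|+|G_n^{(k)}(\eta)|> M\,\rho_n^{\,k+\beta(1-\alpha)}\,\varphi_n^{-k-\beta(1-\alpha)-(\alpha-1)}\Bigl(\frac{\varphi_n}{\varphi(|z_n|)}\Bigr)^{\alpha-1}|F_n(\eta)|^{\alpha}.
\]
We now choose $\beta$ so that the power of $\varphi_n$ vanishes, namely $k+\beta(1-\alpha)=-(\alpha-1)$. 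This gives
\[
\beta=\frac{k+\alpha-1}{\alpha-1}>0>-1,
\]
so $\beta$ is admissible in Theorem~\ref{thm:rescale}. With this choice the estimate becomes
\[
|H_n^{(k)}(\eta)|+|G_n^{(k)}(\eta)|> M\,\rho_n^{-(\alpha-1)}\Bigl(\frac{\varphi_n}{\varphi(|z_n|)}\Bigr)^{\alpha-1}|F_n(\eta)|^{\alpha}.
\]

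\textbf{Contradiction.} The step needing the most care is the ratio $\varphi_n/\varphi(|z_n|)$, which must stay bounded away from zero. Because $|w_n|\to1$, the ratio tends to $1$ uniformly for $\eta$ in compact sets, by the second condition in the definition of smoothly increasing. Here $\rho_n\eta$ lies in a compact set, so a uniform-on-compacts argument (or a subsequence) handles the small parameter $\rho_n$.

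That $|w_n|\to1$ must also be justified. Otherwise, on a subsequence, $w_n\to w_0\in\D$, and $\rho_n/\varphi_n\to0$ forces $F^{\#}(0)=0$ by continuity of $f$'s derivatives, contradicting $F^{\#}(0)=1$. Alternatively, this fact can be extracted from the construction in Theorem~\ref{thm:rescale}.

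Since $F$ is non-constant, pick $\eta_0$ with $F(\eta_0)\ne0$. Then $|F_n(\eta_0)|^{\alpha}\to|F(\eta_0)|^{\alpha}>0$, while $\rho_n^{-(\alpha-1)}\to\infty$ because $\alpha>1$. So the right-hand side at $\eta_0$ tends to $\infty$, whereas the left-hand side converges to the finite number $|H^{(k)}(\eta_0)|+|G^{(k)}(\eta_0)|$. This contradiction shows that $f$ is $\varphi$-normal.
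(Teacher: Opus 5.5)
Your proof is correct and follows essentially the same route as the paper: you apply Theorem~\ref{thm:rescale} with the same exponent $\beta=\frac{k}{\alpha-1}+1$, transfer \eqref{eq:phisuff} to $H_n^{(k)},G_n^{(k)}$ at a point $\eta_0$ with $F(\eta_0)\neq0$, and derive a contradiction from the blow-up of $\rho_n^{-(\alpha-1)}$. Your explicit check that $|w_n|\to1$ is needed for $\varphi(|z_n|)/\varphi(|w_n|)\to1$, and it is a useful detail that the paper leaves implicit.
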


\begin{example}\label{ex:phisuff}
Let $\varphi(r)=(1-r)^{-2}$, $r\in[0,1)$, and let $f(z)=z+\tfrac12\bar z$, so that
$h(z)=z$ and $g(z)=\tfrac12 z$. Take $\alpha=\tfrac32$ and
$M=\tfrac32\big/\big(1+(\tfrac32)^{3/2}\big)$. Since $|h'(z)|+|g'(z)|=\tfrac32$ and
$|f(z)|\le\tfrac32|z|<\tfrac32$, we have for all $z\in\D$
\[
\frac{|h'(z)|+|g'(z)|}{1+|f(z)|^{3/2}}
=\frac{3/2}{1+|f(z)|^{3/2}}
>\frac{3/2}{1+(3/2)^{3/2}}
=M\ge M(1-|z|)=\frac{M}{\varphi(|z|)^{1/2}},
\]
so $f$ satisfies \eqref{eq:phisuff}. Moreover
\[
\frac{f^{\#}(z)}{\varphi(|z|)}=\frac{3/2}{1+|f(z)|^{2}}\,(1-|z|)^{2}\le\frac32,
\]
so $\sup\limits_{z\in\D}f^{\#}(z)/\varphi(|z|)<\infty$ and $f$ is $\varphi$-normal, as predicted by
Theorem~\ref{thm:phisuff}.
\end{example}

\begin{corollary}\label{cor:phisuff}
Let $k$ be a positive integer, $M>0$, and let $\varphi\colon[0,1)\to(0,\infty)$ be a
continuous smoothly increasing function. If $f=h+\bar g$ is a non-constant harmonic mapping
in $\D$ satisfying
\begin{equation}\label{eq:corphisuff}
f^{\#(k)}(z)>\frac{M}{\varphi(|z|)^{k}},\qquad z\in\D,
\end{equation}
then $f$ is $\varphi$-normal.
\end{corollary}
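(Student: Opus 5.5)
The plan is to deduce Corollary~\ref{cor:phisuff} directly from Theorem~\ref{thm:phisuff} by specializing the exponent to $\alpha=k+1$. This is admissible, since $k\ge1$ forces $\alpha=k+1\ge2>1$, and then $\alpha-1=k$.

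With this choice, the left-hand side of \eqref{eq:phisuff} becomes
\[
\frac{|h^{(k)}(z)|+|g^{(k)}(z)|}{1+|f(z)|^{k+1}}=f^{\#(k)}(z),
\]
which is exactly the extended spherical derivative of order $k$ from \eqref{eq:extsph}. The right-hand side becomes $M/\varphi(|z|)^{k}$. Hence hypothesis \eqref{eq:corphisuff} is literally hypothesis \eqref{eq:phisuff} for $\alpha=k+1$.

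The remaining assumptions carry over unchanged: $k$ is a positive integer, $M>0$, $\varphi$ is continuous and smoothly increasing, and $f$ is non-constant. Theorem~\ref{thm:phisuff} therefore applies and yields that $f$ is $\varphi$-normal.

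There is no real obstacle. The only point to check is that $\alpha=k+1$ lies in the range $\alpha>1$ permitted by Theorem~\ref{thm:phisuff}, and this holds for every $k\in\N$.
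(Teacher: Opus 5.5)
Your proposal is correct and is exactly the paper's argument: specialize Theorem~\ref{thm:phisuff} to $\alpha=k+1$, which is admissible since $k\ge1$. Then $\alpha-1=k$, the left-hand side of \eqref{eq:phisuff} becomes $f^{\#(k)}(z)$, and \eqref{eq:corphisuff} is literally \eqref{eq:phisuff}.
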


We next pass to families. Following Lohwater and Pommerenke
\cite[Theorem~1]{LohwaterPommerenke}, Zalcman \cite{Zalcman} established his rescaling lemma
for normal families of meromorphic functions, since then one of the most powerful tools in
the theory. We introduce the corresponding notion in the present setting.

\begin{definition}\label{def:phifamily}
A family $\mathcal F$ of harmonic mappings in $\D$ is said to be \emph{$\varphi$-normal} in
$\D$ if for each compact set $K\subset\D$ there exists $M>0$ such that
\[
\sup\left\{\frac{f^{\#}(z)}{\varphi(|z|)}:z\in K,\ f\in\mathcal F\right\}<M.
\]
\end{definition}

\begin{example}\label{ex:phifamily}
Let $\varphi(|z|)=(1-|z|)^{-2}$ and
$\mathcal F=\big\{f_{n}(z)=z+\tfrac1n\,\overline{z^{\,n}}:z\in\D\big\}$. Then
$h(z)=z$, $g_{n}(z)=\tfrac1n z^{n}$, so $|h'(z)|+|g_{n}'(z)|=1+|z|^{n-1}$ and
\[
f_{n}^{\#}(z)=\frac{1+|z|^{\,n-1}}{1+|f_{n}(z)|^{2}}.
\]
Hence for every compact $K\subset\D$,
\[
\sup_{\substack{z\in K\\ f_{n}\in\mathcal F}}\frac{f_{n}^{\#}(z)}{\varphi(|z|)}
=\sup_{\substack{z\in K\\ f_{n}\in\mathcal F}}
\frac{(1+|z|^{\,n-1})(1-|z|)^{2}}{1+|f_{n}(z)|^{2}}<2,
\]
so $\mathcal F$ is a $\varphi$-normal family.
\end{example}

\begin{theorem}\label{thm:rescalefamily}
Let $\varphi\colon[0,1)\to(0,\infty)$ be a continuous smoothly increasing function and let
$\beta\in(-1,\infty)$. If a family $\mathcal F$ of non-constant harmonic mappings in $\D$ is
\emph{not} $\varphi$-normal, then there exist sequences $\{w_{n}\}\subset\D$,
$\{f_{n}\}\subset\mathcal F$ and $\{\rho_{n}\}\subset(0,1)$, with $\rho_{n}\to0$, such that
\[
F_{n}(\eta)=\left(\frac{\rho_{n}}{\varphi(|w_{n}|)}\right)^{\beta}
f_{n}\!\left(w_{n}+\frac{\rho_{n}\eta}{\varphi(|w_{n}|)}\right)
\]
converge locally uniformly in $\C$ to a non-constant harmonic mapping $F$ satisfying
$F^{\#}(\eta)\le F^{\#}(0)=1$.
\end{theorem}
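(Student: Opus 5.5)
We argue as in the single-function case (Theorem~\ref{thm:rescale}), carrying the extra index through. The only use of non-$\varphi$-normality is one blow-up sequence, and we make no use of where its points lie. Since $\mathcal F$ is not $\varphi$-normal in the sense of Definition~\ref{def:phifamily}, there are $f_n=h_n+\overline{g_n}\in\mathcal F$ and $z_n\in\D$ with
\[
\frac{f_n^{\#}(z_n)}{\varphi(|z_n|)}\longrightarrow\infty .
\]
The construction below is local in the disk $\Delta_n=\{|z-z_n|<R/\varphi(|z_n|)\}$, where $R\in(0,1)$ is fixed. By \eqref{eq:phinorm} we have $R/\varphi(|z_n|)<1-|z_n|$, so $\Delta_n\subset\D$. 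This holds whether $|z_n|\to1$ or $\{z_n\}$ stays in a compact set, so nothing depends on the position of $z_n$.

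For $\rho>0$ and $z\in\D$ put
\[
S_n(z,\rho)=\frac{\rho^{1+\beta}\bigl(|h_n'(z)|+|g_n'(z)|\bigr)}{1+\rho^{2\beta}|f_n(z)|^2}.
\]
This is exactly the spherical derivative at $0$ of $\eta\mapsto\rho^{\beta}f_n(z+\rho\eta)$. Following Pang's device, we consider
\[
\Phi_n(z,t)=S_n\!\left(z,\ \frac{t}{\varphi(|z_n|)}\Bigl(1-\frac{|z-z_n|^2\varphi(|z_n|)^2}{R^2}\Bigr)\right),\qquad z\in\overline{\Delta_n},\ t\ge0 .
\]
Since $\beta>-1$, $\Phi_n$ is continuous in $t$ and vanishes at $t=0$. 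Moreover $\sup_z\Phi_n(z,t)\to\infty$ as $t\to\infty$ for $n$ large, using the blow-up at $z_n$ and the exponent $1+\beta>0$ (the denominator being handled as in Pang's proof, splitting $\beta\ge0$ and $-1<\beta<0$).

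By the intermediate value theorem we pick $t_n$ with $\max_{\overline{\Delta_n}}\Phi_n(\cdot,t_n)=1$, attained at some $w_n$. We then set
\[
\rho_n=t_n\Bigl(1-\frac{|w_n-z_n|^2\varphi(|z_n|)^2}{R^2}\Bigr)\frac{\varphi(|w_n|)}{\varphi(|z_n|)} .
\]
Comparing with $t=1$ and using $f_n^{\#}(z_n)/\varphi(|z_n|)\to\infty$ gives $t_n\to0$, hence $\rho_n\to0$.

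Next we define $F_n$ as in the statement. The normalisation gives $F_n^{\#}(0)=1$. The maximality of $w_n$, together with the standard estimate
\[
1-\frac{|w-z_n|^2\varphi^2}{R^2}\ \ge\ \Bigl(1-\frac{|w_n-z_n|^2\varphi^2}{R^2}\Bigr)\bigl(1-o(1)\bigr)\qquad\text{for } w=w_n+\frac{\rho_n\eta}{\varphi(|w_n|)},\ |\eta|\le r,
\]
yields $F_n^{\#}(\eta)\le1+o(1)$ on each disk $|\eta|\le r$. Here we need $\varphi(|w_n|)/\varphi(|z_n|)$ to stay controlled. This is where the smoothly increasing hypothesis enters, since $|w_n-z_n|\le R/\varphi(|z_n|)$:
\begin{itemize}
\item if $|z_n|\to1$, the uniform convergence in the definition of smoothly increasing gives ratio $\to1$;
\item otherwise, along a subsequence $|z_n|$ stays in a compact subinterval of $[0,1)$, and continuity and positivity of $\varphi$ bound the ratio above and below.
\end{itemize}

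Finally, the locally uniform bound on $F_n^{\#}$ lets us invoke the harmonic Marty theorem of Deng, Ponnusamy and Qiao \cite{DPQ}. It gives a subsequence converging locally uniformly in $\C$ to a harmonic $F$ or to $\infty$; the value $\infty$ is excluded because $F_n^{\#}(0)=1$ and the bound on $F_n^{\#}$ passes to the limit. We apply it to the analytic parts $h_n,g_n$ rescaled with the same factor $\bigl(\rho_n/\varphi(|w_n|)\bigr)^{\beta}$, after removing additive constants as in \cite{BT}. This gives locally uniform convergence of the derivatives, so $F^{\#}$ is the limit of $F_n^{\#}$. Hence $F^{\#}\le F^{\#}(0)=1$, and $F$ is non-constant.

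I expect the main obstacle to be the choice of $t_n,w_n$ for general $\beta\in(-1,\infty)$, especially $\beta<0$, where the factor $\rho^{2\beta}|f_n|^2$ blows up near the zeros of $f_n$. One must check that $\sup_z\Phi_n(z,t)$ is still continuous in $t$ and unbounded, so that the normalisation at level $1$ is possible. I would handle this exactly as in Pang's original lemma, splitting into $\beta\ge0$ and $-1<\beta<0$.
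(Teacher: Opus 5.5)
\textbf{The step that fails.} Your construction is the same Pang-type maximisation the paper uses: a weight vanishing on the boundary of a small disk, the normalisation $\max\Phi_n(\cdot,t_n)=1$, then the harmonic Marty lemma. It breaks at the last step, where you exclude the limit $\infty$ ``because $F_n^{\#}(0)=1$ and the bound on $F_n^{\#}$ passes to the limit''. That is a meromorphic argument: there, $F_n\to\infty$ forces $F_n^{\#}=(1/F_n)^{\#}\to0$ by Cauchy estimates for $1/F_n$. For harmonic mappings $1/F_n$ is not harmonic, and nothing forces $(|H_n'|+|G_n'|)/(1+|F_n|^2)$ to be small where $|F_n|$ is large.

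Concretely, take $\mathcal F=\{f_m(z)=m+m^3(z-\bar z)\}$, i.e.\ $h_m=m+m^3z$ and $g_m=-m^3z$. Then $f_m^{\#}(0)=2m^3/(1+m^2)\to\infty$, so $\mathcal F$ is not $\varphi$-normal. Take $\beta=0$ and $z_n=0$. Your $\Phi_n(\cdot,t)$ is maximised at $w_n=0$, and the normalisation produces $F_n(\eta)=n+i(1+n^2)\operatorname{Im}\eta$. This satisfies $F_n^{\#}(\eta)=1/\bigl(1+(1+n^2)(\operatorname{Im}\eta)^2\bigr)\le F_n^{\#}(0)=1$, and yet $|F_n|\ge n\to\infty$.

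No other choice of $w_n,\rho_n$ helps. For $\beta=0$, every rescaling $f_{m_n}(w_n+c_n\eta)$ has real part identically $m_n$. A finite limit therefore forces $m_n$ to be eventually constant, say $m$. Then $F_n(\eta)-F_n(0)=2im^3c_n\operatorname{Im}\eta\to0$, since $c_n=\rho_n/\varphi(|w_n|)\to0$, so the limit is constant. The same conclusion holds for every $\beta\in(-1,1)$ using families $a_m+b_m(z-\bar z)$ with $b_m\to\infty$, $a_m=b_m^{\gamma}$ and $\max\{0,\beta/(1+\beta)\}<\gamma<\tfrac12$. Replacing $\bar z$ by $(1-\delta_m)\bar z$ with $0<\delta_m b_m\le a_m/4$ makes these families sense-preserving without changing the conclusion.

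So the statement as given is false, at least for $\beta\in(-1,1)$. The paper's proof has exactly the same hole: it simply asserts that a subsequence converges to a harmonic mapping $F$, and the family case inherits this ``verbatim''. What is missing is a hypothesis that rules out $F_n\to\infty$; no refinement of the maximisation can supply it.

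\textbf{A second, repairable point.} Your remark that nothing depends on where $z_n$ lies is wrong for $\beta\neq0$. Comparing at $t=1$ only gives
$\Phi_n(z_n,1)\ge\varphi(|z_n|)^{-|\beta|}\,f_n^{\#}(z_n)/\varphi(|z_n|)$.
This need not blow up if $|z_n|\to1$, so $t_n\to0$ does not follow. For $\beta>1$ even the existence of $t_n$ is in doubt. Your claim that $\sup_z\Phi_n(z,t)\to\infty$ as $t\to\infty$ fails when $f_n$ has no zeros on $\overline{\Delta_n}$, because then $\sup_{\rho>0}S_n(z,\rho)$ is bounded on $\overline{\Delta_n}$.

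The paper avoids this by using Definition~\ref{def:phifamily} to place the blow-up points in a fixed disk $\{|w|\le r'\}$, where $\varphi$ is bounded above and below. With that choice, your construction goes through for all $\beta>-1$ up to and including the bound $F_n^{\#}\le 1+o(1)$ with $F_n^{\#}(0)=1$. Only the passage to a finite non-constant limit cannot be justified.
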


As an application we obtain a Lappan-type theorem. The novelty is that the test set $E$ may
be taken to have only \emph{three} points (rather than five), a saving made possible by
Lemma~\ref{lem:harmtwo} below.

\begin{theorem}\label{thm:lappanfamily}
Let $\varphi\colon[0,1)\to(0,\infty)$ be a continuous smoothly increasing function. Let
$\mathcal F$ be a family of sense-preserving harmonic mappings in $\D$ and let $E$ be a set
of three distinct complex numbers such that
\[
\sup\left\{\frac{f^{\#}(w)}{\varphi(|w|)}:w\in f^{-1}(E),\ f\in\mathcal F\right\}<\infty.
\]
Then $\mathcal F$ is $\varphi$-normal in $\D$.
\end{theorem}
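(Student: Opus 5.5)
We argue by contradiction and apply Theorem~\ref{thm:rescalefamily} with $\beta=0$. Suppose $\mathcal F$ is not $\varphi$-normal, and write $E=\{a_1,a_2,a_3\}$. Since a constant sense-preserving map has $f^{\#}\equiv 0$ and never spoils the supremum, we may assume the relevant mappings are non-constant. Theorem~\ref{thm:rescalefamily} then supplies $w_n\in\D$, $f_n=h_n+\overline{g_n}\in\mathcal F$ and $\rho_n\to 0$ such that
\[
F_n(\eta)=f_n\!\left(w_n+\frac{\rho_n\eta}{\varphi(|w_n|)}\right)\longrightarrow F(\eta)
\]
locally uniformly in $\C$. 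The limit $F$ is a non-constant harmonic mapping with $F^{\#}\le F^{\#}(0)=1$. Because each $F_n$ is sense-preserving, the limit $F=H+\overline{G}$ satisfies $|G'|\le|H'|$. By Hurwitz-type arguments applied to the dilatations, $F$ is either sense-preserving or has $|G'|\equiv|H'|$; the latter case needs separate handling, for instance by using that the dilatation is then a unimodular constant.

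\textbf{Step 1: a one-sided bound at preimages of $E$.} Let $\eta_0$ satisfy $F(\eta_0)=a_j$. Suppose $F-a_j$ is not identically constant near $\eta_0$; this holds since $F$ is non-constant and harmonic. A harmonic-mapping version of Hurwitz's theorem applies to sense-preserving maps: their zeros are isolated and have positive index, and the argument principle for harmonic maps governs them. It gives points $\eta_n\to\eta_0$ with $F_n(\eta_n)=a_j$, that is, $z_n:=w_n+\rho_n\eta_n/\varphi(|w_n|)\in f_n^{-1}(E)$. Differentiating the rescaling, we find
\[
F_n^{\#}(\eta_n)=\frac{\rho_n}{\varphi(|w_n|)}\,f_n^{\#}(z_n)
=\rho_n\,\frac{\varphi(|z_n|)}{\varphi(|w_n|)}\cdot\frac{f_n^{\#}(z_n)}{\varphi(|z_n|)}.
\]
The smoothly increasing property gives $\varphi(|z_n|)/\varphi(|w_n|)\to1$ when $|w_n|\to1$. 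If instead $|w_n|$ stays away from $1$, continuity of $\varphi$ keeps the ratio bounded. The hypothesis bounds the last factor, and $\rho_n\to0$, so $F_n^{\#}(\eta_n)\to0$. Since $F_n^{\#}\to F^{\#}$ locally uniformly, we conclude that $F^{\#}(\eta_0)=0$. Hence $H'(\eta_0)=G'(\eta_0)=0$ at every $\eta_0\in F^{-1}(E)$: each $a_j$-point of $F$ is a critical point, where both $h'$ and $g'$ vanish.

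\textbf{Step 2: contradiction via Lemma~\ref{lem:harmtwo}.} The limit $F$ is a non-constant harmonic mapping on $\C$ with bounded spherical derivative. By Step 1, all of its $a_j$-points, $j=1,2,3$, are points where $|H'|+|G'|=0$. The abstract says Lemma~\ref{lem:harmtwo} is what reduces five points to three. I expect it to be a harmonic analogue of the classical fact that a meromorphic function of bounded spherical derivative cannot have all $a$-points multiple for three (rather than five) values in the sense-preserving setting, or that such an $F$ omits or is degenerate at these values. Invoking it shows that $F$ must be constant, which contradicts $F^{\#}(0)=1$.

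The main obstacle is Step 1's Hurwitz argument. We must guarantee that $a_j$-points of the limit $F$ are approximated by $a_j$-points of $F_n$, which requires the sense-preserving hypothesis, and we must dispose of the degenerate limit where $F$ is not sense-preserving or takes a value of $E$ on a curve. Here I would use the degree/argument principle for sense-preserving harmonic maps (index of an isolated zero is positive). I would also verify that Lemma~\ref{lem:harmtwo} exactly covers the resulting limit configuration.
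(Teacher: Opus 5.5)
\paragraph{Comparison with the paper.}
Your route is the paper's route, run contrapositively. The paper also rescales with $\beta=0$ via Theorem~\ref{thm:rescalefamily}. It then picks $b\in E$ at which $F-b$ has a simple zero, pulls that zero back to $f_n^{-1}(\{b\})$ with Lemma~\ref{lem:hurwitz}, and shows $f_n^{\#}/\varphi\to\infty$ there. You instead show that boundedness on $f_n^{-1}(E)$ forces $F^{\#}=0$ at every $E$-point of $F$. Your Step~2 is then exactly Lemma~\ref{lem:harmtwo}, and the bound on $F^{\#}$ plays no role. For a sense-preserving $F=H+\overline{G}$ on $\C$ one has $G'=cH'$ with $|c|<1$. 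So $F^{\#}(\eta_0)=0$ at an $a$-point means $H'(\eta_0)=0$, i.e.\ a multiple zero of $H-a^{*}$. Values of $E$ omitted by $F$ are covered vacuously by the lemma.

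\paragraph{The gap.}
The real gap is the case you postponed, $G'\equiv cH'$ with $|c|=1$, and it cannot be closed: it actually occurs, and then the statement fails. The paper's proof has the same hole: it simply asserts that the limit $F$ is sense-preserving, which both Lemma~\ref{lem:hurwitz} and Lemma~\ref{lem:harmtwo} require.

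Here is a counterexample. Take $\varphi(r)=(1-r)^{-2}$ and $f_n(z)=nz+(n-1)\bar z$ for $n\ge2$, i.e.\ $f_n(x+iy)=(2n-1)x+iy$.
\begin{itemize}
\item Each $f_n$ is sense-preserving (indeed univalent), since $|g_n'|=n-1<n=|h_n'|$.
\item $\operatorname{Im} f_n(z)=\operatorname{Im} z\in(-1,1)$, so for $E=\{2i,3i,4i\}$ we get $f_n^{-1}(E)=\emptyset$, and the hypothesis holds trivially.
\item Yet $f_n^{\#}(0)/\varphi(0)=2n-1\to\infty$, so $\{f_n\}$ is not $\varphi$-normal.
\end{itemize}
Every rescaled limit $F=\lim f_n(w_n+\rho_n\eta/\varphi(|w_n|))$ here has the form $F(\eta)=L\,\Real\eta+A+iB$ with $L>0$, $A,B\in\mathbb{R}$ and $|B|\le 1$. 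Its dilatation is identically $1$, and its image is a horizontal line missing all of $E$. So neither a Hurwitz/index argument nor any value-distribution lemma can produce a contradiction in this case.

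\paragraph{What is missing.}
You need a hypothesis that rules out degenerate limits, for instance $|g'/h'|\le k<1$ on $\D$ uniformly for $f=h+\bar g\in\mathcal F$. Then the limit satisfies $|G'|\le k|H'|$, so by Liouville $G'=cH'$ with $|c|\le k<1$. The limit $F$ is then sense-preserving, and your Steps~1--2 (equivalently, the paper's proof) go through.
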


\begin{theorem}\label{thm:lappansingle}
Let $f=h+\bar g$ be a non-constant sense-preserving harmonic mapping in $\D$ and let $E$ be
a set of three distinct complex numbers such that
\[
\sup\left\{\frac{f^{\#}(w)}{\varphi(|w|)}:w\in f^{-1}(E)\right\}<\infty.
\]
Then $f$ is a $\varphi$-normal harmonic mapping in $\D$.
\end{theorem}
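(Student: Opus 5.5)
We argue by contradiction, using the rescaling lemma Theorem~\ref{thm:rescale} with $\beta=0$. Suppose $f$ is not $\varphi$-normal. Then Theorem~\ref{thm:rescale} supplies $w_n\in\D$ and $\rho_n\to0$ such that
\[
F_n(\eta)=f\!\left(w_n+\frac{\rho_n\eta}{\varphi(|w_n|)}\right)=h_n(\eta)+\overline{g_n(\eta)}
\]
converges locally uniformly in $\C$ to a non-constant harmonic mapping $F=H+\bar G$ with $F^{\#}\le F^{\#}(0)=1$.

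Here $h_n(\eta)=h(w_n+\rho_n\eta/\varphi(|w_n|))$, and similarly for $g_n$. Because $|h_n'|+|g_n'|$ is locally bounded, we may pass to subsequences so that $h_n-h_n(0)\to H-H(0)$ and $g_n\to G$ locally uniformly; in particular $H'$ and $G'$ are the local uniform limits of $h_n'$ and $g_n'$. Each $F_n$ is sense-preserving, i.e.\ $|g_n'|\le|h_n'|$. This inequality passes to the limit, so $|G'|\le|H'|$ and $F$ is sense-preserving or constant. By construction $F$ is non-constant.

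Write $E=\{a_1,a_2,a_3\}$ and let $K$ be the supremum in the hypothesis. We claim that $F^{-1}(E)$ contains no point at which $F^{\#}>0$. Suppose $F(\eta_0)=a_j$. We want points $\eta_n\to\eta_0$ with $F_n(\eta_n)=a_j$. For a non-constant sense-preserving harmonic limit, Hurwitz's theorem for harmonic mappings (via the argument principle / degree, since $F-a_j$ has isolated zeros of positive index) provides such $\eta_n$. Then
\[
F_n^{\#}(\eta_n)=\frac{\rho_n}{\varphi(|w_n|)}\,f^{\#}(z_n),\qquad z_n=w_n+\frac{\rho_n\eta_n}{\varphi(|w_n|)}\in f^{-1}(E).
\]
Hence
\[
F_n^{\#}(\eta_n)\le \rho_n K\,\frac{\varphi(|z_n|)}{\varphi(|w_n|)}.
\]
We need $\varphi(|z_n|)/\varphi(|w_n|)\to1$. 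If $|w_n|\to1$ along a subsequence, this follows from the smoothly increasing property. Otherwise $\varphi(|w_n|)$ stays bounded, and continuity of $\varphi$ on a compact subdisk gives the ratio bounded. In both cases $F_n^{\#}(\eta_n)\to0$, so $F^{\#}(\eta_0)=0$, i.e.\ $H'(\eta_0)=G'(\eta_0)=0$. Thus every $a_j$-point of $F$ is a critical point of $F$.

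It remains to derive a contradiction from the following facts: $F$ is a non-constant sense-preserving harmonic mapping on $\C$, $F^{\#}$ is bounded, and all preimages of three distinct values are zeros of $|H'|+|G'|$. This is where we invoke Lemma~\ref{lem:harmtwo}, which presumably asserts that such a harmonic mapping omits or totally ramifies over at most two values. Our route is to write $G'=\omega H'$ with a holomorphic dilatation satisfying $|\omega|\le1$. By Liouville's theorem $\omega$ is constant, say $\omega\equiv c$ with $|c|\le1$.

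If $|c|<1$, then $F=H+\bar c\,\bar H+\text{const}$, so $F$ is the composition of $H$ with a nondegenerate real-linear map. Consequently $H$ is an entire function of bounded spherical derivative (by comparability) whose $b_j$-points are all multiple, for three distinct values $b_j$ (the preimages of the $a_j$ under the affine map). The Nevanlinna ramification bound $\sum(1-1/m_j)\le2$ permits at most four totally ramified values for meromorphic functions, but for entire functions $\infty$ is already omitted, so three multiple values are impossible for non-constant $H$. If $|c|=1$, then $F$ is real-linear of rank one in $H$, and it maps into a line. We treat this case separately, or else show it is excluded: by an openness argument $F$ would be constant on preimage sets, contradicting non-constancy.

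The main obstacle is this final value-distribution step, namely the correct use of Lemma~\ref{lem:harmtwo} and the Hurwitz step for harmonic limits. We expect to rely on Lemma~\ref{lem:harmtwo} to handle the conclusion directly.
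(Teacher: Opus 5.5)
Your route is the paper's. You rescale with $\beta=0$: the paper uses Lemma~\ref{lem:phizalcman}, you use Theorem~\ref{thm:rescale}. You then use Hurwitz at an $E$-point $\eta_0$ of the limit to get $F^{\#}(\eta_0)=0$ from the hypothesis. Finally you reduce, via Liouville for the dilatation, to an entire $H$ with three totally ramified values, which is exactly the content of Lemma~\ref{lem:harmtwo}. When the limit dilatation satisfies $|c|<1$ your argument is complete and correct; Nevanlinna's bound also covers omitted values.

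The gap is the case $|c|=1$. You flag it but do not close it, and it cannot be closed by looking at $F$ alone. Letting $n\to\infty$ in $|g_n'|<|h_n'|$ only gives $G'=cH'$ with $|c|\le 1$, and for $|c|=1$ the limit maps $\C$ into a line. Two things then go wrong.

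\begin{itemize}
\item The Hurwitz step fails. Take $F_n(\eta)=\eta+(1-\epsilon_n)\bar\eta+i\sqrt{\epsilon_n}$ with $\epsilon_n\to0^{+}$. Each $F_n$ is sense-preserving and $F_n\to F(\eta)=\eta+\bar\eta$ locally uniformly. Yet $F_n$ vanishes only at $\eta=-i/\sqrt{\epsilon_n}\to\infty$, although $F$ vanishes on the whole imaginary axis with $F^{\#}=2$ there.
\item The facts you propose to contradict are consistent. Take $F(\eta)=\tfrac12(\eta+\bar\eta)$ and $E=\{i,2i,3i\}$. Then $F$ is non-constant, $|G'|\le|H'|$, and $F^{\#}(\eta)=1/(1+(\Real\eta)^2)\le F^{\#}(0)=1$. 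Since $F$ omits $E$, every $E$-point is vacuously critical.
\end{itemize}

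Your ``openness argument'' does not help, since a map into a line is not open.

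What is missing is an argument, using $f$ rather than $F$, that excludes $|c|=1$. The dilatations of $F_n$ are $\omega\big(w_n+\rho_n\eta/\varphi(|w_n|)\big)$ with $\omega=g'/h'$. By \eqref{eq:phinorm}, $\rho_n/\varphi(|w_n|)\le\rho_n(1-|w_n|)$, so Schwarz--Pick gives $c=\lim\omega(w_n)$ along a subsequence. One must therefore rule out $|\omega(w_n)|\to1$ at the rescaling centres, and nothing in your proposal does this.

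The paper does not supply this step either. Its proof, through that of Theorem~\ref{thm:lappanfamily}, simply asserts that the limit is sense-preserving and then applies Lemmas~\ref{lem:hurwitz} and~\ref{lem:harmtwo}, both of which need $|c|<1$. Following the paper therefore reproduces the gap rather than closing it. Under an extra hypothesis such as $\sup_{\D}|\omega|<1$ (that is, $f$ quasiregular), $|c|<1$ is automatic, and your argument is then a complete proof.
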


\begin{remark}\label{rem:improveBDP}
Theorem~\ref{thm:lappansingle} improves \cite[Theorem~1.6]{BDP} by reducing the cardinality
of the test set from five to three. By the same argument, the cardinality of the set $E$ in
the Lappan-type criterion \cite[Theorem~1.6]{BT} can likewise be reduced to three.
\end{remark}

\section{Preliminary lemmas}

We collect the auxiliary results used in the proofs. The following is the harmonic analogue of Marty's theorem, due to Deng, Ponnusamy and Qiao \cite{DPQ}.

\begin{lemma}[{\cite[Lemma~1]{DPQ}}]\label{lem:marty}
A family $\mathcal F$ of harmonic mappings $f=h+\bar g$ in $\D$ is normal if
$\{f^{\#}(z):f\in\mathcal F\}$ is locally uniformly bounded.
\end{lemma}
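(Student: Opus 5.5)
The plan is to mimic the classical proof of Marty's theorem: turn the bound on $f^{\#}$ into chordal equicontinuity, apply Arzel\`a--Ascoli, and then identify the limit. Fix a compact disk $\overline{B}\subset\D$ and suppose $f^{\#}\le C$ on a slightly larger concentric disk for all $f\in\mathcal F$.

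\emph{Step 1: a chordal Lipschitz estimate.} Write $f=h+\bar g$. Along a segment $\gamma(t)=z+t(w-z)$ we have
\[
\left|\tfrac{d}{dt}f(\gamma(t))\right|=\left|h'(\gamma)(w-z)+\overline{g'(\gamma)(w-z)}\right|\le \bigl(|h'(\gamma)|+|g'(\gamma)|\bigr)|w-z|.
\]
Hence the spherical length of the curve $f\circ\gamma$, computed with the element $|dw|/(1+|w|^2)$, is at most $\int_0^1 f^{\#}(\gamma(t))\,|w-z|\,dt\le C|w-z|$. The chordal distance $\chi$ is bounded by a constant multiple of this spherical length. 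It follows that $\chi(f(z),f(w))\le C'|z-w|$ for all $z,w\in\overline B$ and all $f\in\mathcal F$.

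\emph{Step 2: extraction of a limit.} The family is therefore equicontinuous from $\overline B$ into the compact Riemann sphere. By Arzel\`a--Ascoli and a diagonal argument over an exhaustion of $\D$, every sequence $\{f_n\}$ has a subsequence converging locally uniformly in $\chi$ to a continuous map $F\colon\D\to\C\cup\{\infty\}$.

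\emph{Step 3: the limit is harmonic where finite.} The set $U=\{F\neq\infty\}$ is open. Near any point of $U$, the $f_n$ are eventually uniformly bounded, and there chordal convergence is equivalent to Euclidean convergence. Since locally uniform limits of harmonic functions are harmonic, $F$ is harmonic on $U$. The harmonic conjugate decomposition passes to the limit on simply connected subdomains.

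\emph{Step 4 (the main obstacle): $U=\D$ or $U=\emptyset$.} This dichotomy is what is needed to conclude that $F$ is a harmonic mapping or $F\equiv\infty$. Unlike the meromorphic case, $1/f_n$ is not harmonic, so the usual trick of passing to reciprocals is unavailable. My first attempt is to argue via Harnack's inequality. Suppose $z_0\in\partial U\cap\D$. By Step 1, $|f_n|$ is eventually large on a small disk $B_0$ around $z_0$, intersected with the set where $F$ is large. On a nearby disk $B_1\subset U$, by contrast, the $f_n$ are bounded. Write $f_n=u_n+iv_n$ and choose one of $\pm u_n$ or $\pm v_n$ that tends to $+\infty$ at $z_0$ along a further subsequence. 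I would then try to show that this function is bounded below on a disk containing both $z_0$ and a point of $U$. Harnack's inequality would then force it to tend to $+\infty$ at the point of $U$, which is a contradiction.

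The delicate point is controlling the sign of the large values uniformly on that disk. For this I would use the chordal Lipschitz bound again, to keep $f_n$ inside a fixed small spherical neighborhood of its value at $z_0$ along short paths. If this route proves too technical, I would fall back on the argument of \cite{DPQ}, which is the cited source of the lemma.
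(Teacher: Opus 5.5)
Your Steps 1--3 are correct. Steps 1--2 are already the whole standard Marty-type argument. The paper gives no proof of its own and only cites Deng--Ponnusamy--Qiao. If ``normal'' is read as ``every sequence has a subsequence converging locally uniformly in the chordal metric,'' then the estimate $\chi(f(z),f(w))\le\int_\gamma f^{\#}|dz|$ plus Arzel\`a--Ascoli finishes the proof. Step~3 then records that the limit is harmonic wherever it is finite.

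The genuine gap is Step~4. You declare it necessary, and that stronger form is how the paper actually uses the lemma: in Claim~2 of the proof of Theorem~\ref{thm:rescale} it is invoked to produce a limit that is a harmonic mapping. But you do not prove it, and the Harnack route cannot work as sketched. Harnack's inequality needs one of $\pm\Real f_n$, $\pm\operatorname{Im} f_n$ to be bounded below on a \emph{fixed} disk that reaches into $U$. The chordal Lipschitz bound gives no sign control near $\infty$: a chordal ball of fixed radius about a point of large modulus contains a whole neighbourhood $\{|w|>R\}$ of $\infty$, hence values of every argument.

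Quantitatively, $|\nabla\arg f_n|\le(|h_n'|+|g_n'|)/|f_n|\le C(1+|f_n|^2)/|f_n|$, so the argument may turn at a rate of about $C|f_n|$. The bound therefore pins down the sign of a component only on a disk of radius of order $1/(C|f_n(z_0)|)\to0$, which never reaches a fixed point of $U$.

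Nor can an argument using only harmonicity of the limit on $U$ succeed. Unlike holomorphic or real-valued harmonic functions, complex-valued harmonic maps can tend to $\infty$ along an entire boundary curve (proper harmonic maps of $\D$ into $\C$ exist). So any proof of the dichotomy must use the $f^{\#}$ bound, together with harmonicity of the $f_n$ across $\partial U$, in an essential way. ``Falling back on'' the cited source is not a substitute.

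As written, the proposal proves only chordal subconvergence, with a limit that is harmonic on $\{F\neq\infty\}$. Either state that as the meaning of normality and stop after Step~3, or supply a genuinely new idea for Step~4.
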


\begin{remark}\label{rem:martyconverse}
A partial converse holds under the additional assumption $\Real\big(h(z)\overline{g(z)}\big)>0$;
see \cite[Lemma~2.1]{BDP}.
\end{remark}

\begin{lemma}[{\cite[Theorem~1.2]{BT}}]\label{lem:resnormal}
A non-constant harmonic mapping $f$ in $\D$ is normal if and only if there do \emph{not}
exist sequences $\{w_{n}\}\subset\D$ and $\{\rho_{n}\}\subset(0,\infty)$ with $\rho_{n}\to0$
such that
\[
F_{n}(\eta):=\rho_{n}^{\,\beta}\,f(w_{n}+\rho_{n}\eta)
\]
converges locally uniformly in $\C$ to a non-constant harmonic mapping $F$ satisfying
$F^{\#}(\eta)\le F^{\#}(0)=1$, where $\beta\in(-1,\infty)$ is arbitrary but fixed.
\end{lemma}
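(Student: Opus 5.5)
\emph{Reading of the statement.} I will read the rescaling as Bharti and Thin do: the centres $w_n$ are allowed to approach $\partial\D$, and the scales are small hyperbolically, $\rho_n/(1-|w_n|)\to0$. Without that condition the necessity direction fails trivially. For example, $f(z)=z$ is normal, yet $\beta=-1$, $w_n=0$ and $\rho_n\to0$ give $F_n(\eta)=\eta$.

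\emph{Sufficiency: non-normal implies a rescaling exists.} I would argue Pang--Zalcman style. If $f$ is not normal, Definition~\ref{def:normalharmeq} gives points $z_n\in\D$ with $(1-|z_n|^2)f^{\#}(z_n)\to\infty$. Put $r_n=1-|z_n|$ and $f_n(z)=f(z_n+r_n z)$ on $\D$. Then $\{f_n\}$ is not normal near $0$, since $f_n^{\#}(0)=r_nf^{\#}(z_n)\to\infty$ and a normal family of harmonic maps with harmonic limit has locally bounded spherical derivatives. For $\zeta$ in a disc $|\zeta|\le t<1$ and $\rho>0$, consider
\[
\Phi_n(\zeta,\rho)=\frac{\rho^{1+\beta}\big(|h_n'(\zeta)|+|g_n'(\zeta)|\big)}{1+\rho^{2\beta}|f_n(\zeta)|^2}.
\]
This is exactly the spherical derivative at $0$ of $\eta\mapsto\rho^{\beta}f_n(\zeta+\rho\eta)$. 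Because $\beta>-1$, we have $\Phi_n(\zeta,\rho)\to0$ as $\rho\to0$, uniformly on compacts where $h_n',g_n'$ are bounded. This is where the hypothesis $\beta>-1$ enters, treating the two cases $f_n(\zeta)=0$ and $f_n(\zeta)\neq0$ as in Pang.

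Following Chen--Gu and Pang, I define $\rho_n$ through the weighted extremal quantity
\[
\sup_{|\zeta|\le t}\,\sup_{0<\rho\le t-|\zeta|}\frac{\Phi_n(\zeta,\rho)}{\ldots}=1 .
\]
More concretely, $\rho_n(\zeta)$ is the largest $\rho\le(t-|\zeta|)$ with $\Phi_n(\zeta,\rho)=1$, found by continuity and the intermediate value theorem. Then $\rho_n=\max$ of a suitably weighted $\rho_n(\zeta)$ is attained at some $\zeta_n$. The blow-up $f_n^{\#}(0)\to\infty$ forces $\rho_n\to0$, and the maximality gives, for $F_n(\eta)=\rho_n^{\beta}f_n(\zeta_n+\rho_n\eta)$,
\[
F_n^{\#}(\eta)\le 1+o(1)\ \text{locally uniformly},\qquad F_n^{\#}(0)=1 .
\]
Undoing the change of variables gives $w_n=z_n+r_n\zeta_n$ with scale $r_n\rho_n$, which is hyperbolically small.

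\emph{Passing to the limit.} By Lemma~\ref{lem:marty}, $\{F_n\}$ is normal in $\C$, so a subsequence converges locally uniformly in the spherical metric. Two facts remain. First, the limit $F$ is a genuine harmonic map $H+\overline{G}$ and not $\infty$. Second, $F^{\#}(0)=\lim F_n^{\#}(0)=1$, which forces $F$ to be non-constant, while $F^{\#}\le1$ follows in the limit. For both facts I must show that the analytic parts $H_n,G_n$, normalized by $G_n(0)=0$, converge individually, so that $H_n'\to H'$ and $G_n'\to G'$.

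\emph{Main obstacle.} This is precisely the step I expect to be hardest, since uniform convergence of $H_n+\overline{G_n}$ alone does not control $H_n'$ and $G_n'$ separately. My plan is as follows. Local uniform convergence of $F_n$ to a finite limit bounds $F_n$ on compacts, and then $F_n^{\#}\le 2$ bounds $|H_n'|+|G_n'|$ on compacts. Montel's theorem applied to $H_n',G_n'$, together with the normalization, then yields analytic limits, and derivative convergence follows. I would rule out the limit $\infty$ at some point by the local boundedness of $|H_n'|+|G_n'|$ relative to $1+|F_n|^2$, mimicking the proof in \cite{DPQ}.

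\emph{Necessity: normal implies no rescaling exists.} This direction is a short contradiction. Suppose $f$ is normal and such a sequence exists. Then $F^{\#}(0)=1$ yields
\[
\rho_n^{1+\beta}\big(|h'(w_n)|+|g'(w_n)|\big)\asymp 1+\rho_n^{2\beta}|f(w_n)|^2 .
\]
Combined with $(1-|w_n|^2)f^{\#}(w_n)\le C$ and $\rho_n/(1-|w_n|)\to0$, this forces $F$ to be constant, again splitting into the cases $\beta\ge0$ and $-1<\beta<0$ as in Pang's lemma, which is a contradiction.
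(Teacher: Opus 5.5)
The paper gives no proof of this lemma; it is quoted from \cite{BT}. The paper only uses the implication ``$f$ not normal $\Rightarrow$ a rescaling exists'', with $\beta=2$, in the proof of Theorem~\ref{thm:lowerbound}.

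\textbf{Necessity.} This half of your proposal cannot be completed. The assertion that normality plus $F^{\#}(0)=1$ ``forces $F$ to be constant'' is false for every $\beta\neq0$, even with your condition $\rho_n/(1-|w_n|)\to0$.

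For $-1<\beta<0$, take the bounded, hence normal, function $f(z)=\sin\bigl(\log\frac{1}{1-z}\bigr)$. Let $\delta_n=e^{-2n\pi}$, $w_n=1-\delta_n$ (a zero of $f$) and $\rho_n=\delta_n^{1/(1+\beta)}$. With $\epsilon_n=\rho_n/\delta_n\to0$ one has $\rho_n^{\beta}=1/\epsilon_n$, so $F_n(\eta)=\epsilon_n^{-1}\sin\bigl(-\log(1-\epsilon_n\eta)\bigr)\to\eta$. The limit satisfies $F^{\#}\le F^{\#}(0)=1$.

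For $\beta>0$, take $u=\frac{1+z}{1-z}$ and $f=e^{u}$. Since $(1-|z|^2)|u'|=2\Real u$, we get $(1-|z|^2)f^{\#}\le 2\Real u\,e^{-\Real u}\le 2/e$, so $f$ is normal. Choose $|1-w_n|=\delta_n\to0$ with $\Real u(w_n)=2\beta\log(1/\delta_n)$, and $\rho_n=\delta_n^{2}$. Then:
\begin{itemize}
\item $\rho_n/(1-|w_n|)\to0$ and $|\rho_n^{\beta}e^{u(w_n)}|=1$;
\item $u'(w_n)\rho_n\to-2$, because the approach to $1$ is tangential;
\item $\rho_n^{2}|u''|=O(\delta_n)$ nearby.
\end{itemize}
So a subsequence of $F_n$ tends to $e^{i\theta_0}e^{-2\eta}$, whose spherical derivative $1/\cosh(2\Real\eta)$ is maximal, equal to $1$, at $0$.

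The implication holds only for $\beta=0$, where it is one line: on $|\eta|\le R$, $F_n^{\#}(\eta)=\rho_n f^{\#}(w_n+\rho_n\eta)\le C\rho_n/(1-|w_n|-R\rho_n)\to0$. Two smaller points: your example in the ``Reading'' paragraph uses the excluded value $\beta=-1$. Also, $\rho_n/(1-|w_n|)\to0$ is automatic, since $F_n$ must be defined on each disc $|\eta|\le R$ for large $n$.

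\textbf{Sufficiency.} The obstacle you flag, excluding the limit $\infty$, is not a technicality. Bounded spherical derivatives do not prevent it for harmonic maps. For example, $n+n^{2}\eta-\overline{n^{2}\eta}=n+2in^{2}\,\mathrm{Im}\,\eta$ has spherical derivative $<2$, equal to $2n^2/(1+n^2)$ at $0$, yet tends to $\infty$.

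For $\beta\le0$ this is fatal. Let $h=u+e^{u}$ and $g=-e^{u}$, so $f=u+2i\,\mathrm{Im}\,e^{u}$ and $\Real f=\Real u$. On $(0,1)$ we have $f(x)=X:=u(x)$ and $(1-x^2)f^{\#}(x)=2X(1+2e^{X})/(1+X^{2})\to\infty$, so $f$ is not normal. However, $(1-|z|^2)(|h'|+|g'|)\le 2\Real u\,(1+2e^{\Real u})$. Suppose $|F_n|\le K$ on $|\eta|\le R$. Then at $z=w_n+\rho_n\eta$ we have $\Real u\le K\rho_n^{-\beta}\le K$. Hence $F_n^{\#}\le\rho_n^{1+\beta}(|h'|+|g'|)\le 2K(1+2e^{K})\rho_n/(1-|z|^2)\to0$, and no non-constant limit exists. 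So, read literally, the equivalence fails for every fixed $\beta$.

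What you can rescue is the $\beta=0$ necessity above, and the sufficiency for $\beta>1$, which is the case the paper uses. For the latter, the extremal step must be set up correctly:
\begin{itemize}
\item Take the \emph{first} root of $\Phi_n(\zeta,\cdot)=1$, not the largest. For $\beta>1$ the logarithmic derivative of $\Phi_n(\zeta,\cdot)$ has numerator $(1+\beta)+(1-\beta)\rho^{2\beta}|f_n(\zeta)|^2$, so the map is not monotone.
\item Take a \emph{minimum} of weighted critical scales, not a maximum. Blow-up at one point only drives the smallest one to $0$.
\item Compare comparable scales via $\Phi_n(\zeta,\lambda\rho)\le\lambda^{1+\beta}\Phi_n(\zeta,\rho)$ for $\lambda\ge1$.
\end{itemize}
The first-root choice also keeps $F_n(0)$ bounded. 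Otherwise, for fixed $\lambda<1$, the map $\lambda^{\beta}F_n(\lambda\eta)$ would have spherical derivative at $0$ tending to $\lambda^{1-\beta}>1$, contradicting the first-root choice. Together with Lemma~\ref{lem:marty}, this excludes the limit $\infty$.
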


\begin{lemma}[{\cite[Theorem~1.3]{BDP}}]\label{lem:phizalcman}
Let $\varphi\colon[0,1)\to(0,\infty)$ be smoothly increasing. A non-constant harmonic
mapping $f=h+\bar g$ in $\D$ is \emph{not} $\varphi$-normal if and only if there exist
sequences $\{w_{n}\}\subset\D$ and $\{\rho_{n}\}\subset(0,1)$, with $\rho_{n}\to0$, such that
\[
F_{n}(\eta)=f\!\left(w_{n}+\frac{\rho_{n}\eta}{\varphi(|w_{n}|)}\right)
\]
converges locally uniformly in $\C$ to a non-constant harmonic mapping $F$.
\end{lemma}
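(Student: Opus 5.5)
The proof runs the standard Zalcman--Pang scheme for the forward implication, with $\varphi$ playing the role of the hyperbolic weight, and a direct estimate for the converse. Throughout, the smoothly increasing property is what replaces $\varphi(|z|)$ by $\varphi(|w_n|)$ near the rescaling centres.

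\emph{Converse direction.} Suppose $F_n\to F$ locally uniformly in $\C$ with $F$ non-constant.
\begin{itemize}
\item Locally uniform convergence of harmonic mappings forces locally uniform convergence of the analytic parts after normalisation, and hence of $h_n'$ and $g_n'$. Therefore $F_n^{\#}\to F^{\#}$ at every point where $F$ is finite.
\item Since $F$ is non-constant, there is $\eta_0$ with $F(\eta_0)\in\C$ and $F^{\#}(\eta_0)>0$.
\item By the chain rule, $F_n^{\#}(\eta_0)=\rho_n f^{\#}(z_n)/\varphi(|w_n|)$ with $z_n=w_n+\rho_n\eta_0/\varphi(|w_n|)$.
\item First, $|w_n|\to1$ along a subsequence. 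Otherwise $\{w_n\}$ stays in a compact subset of $\D$, where $f^{\#}$ and $\varphi$ are bounded. Then $F_n^{\#}\to0$ locally, contradicting $F^{\#}(\eta_0)>0$.
\item The smoothly increasing hypothesis then gives $\varphi(|z_n|)/\varphi(|w_n|)\to1$. Hence $f^{\#}(z_n)/\varphi(|z_n|)\approx F^{\#}(\eta_0)/\rho_n\to\infty$, so $f$ is not $\varphi$-normal.
\end{itemize}

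\emph{Forward direction.} Pick $z_n\in\D$ with $f^{\#}(z_n)/\varphi(|z_n|)\to\infty$.
\begin{itemize}
\item Necessarily $|z_n|\to1$, since $f^{\#}/\varphi$ is bounded on compacta.
\item Fix $r\in(0,1)$ and put $g_n(\zeta)=f(z_n+\zeta/\varphi(|z_n|))$ for $|\zeta|<1$. This is well defined for large $n$ because $\varphi(r)(1-r)\ge1$ and $\varphi(r)(1-r)\to\infty$.
\item We have $g_n^{\#}(\zeta)=f^{\#}(z)/\varphi(|z_n|)$ with $z=z_n+\zeta/\varphi(|z_n|)$, so $g_n^{\#}(0)\to\infty$.
\item Consider $(r-|\zeta|)\,g_n^{\#}(\zeta)$ on $|\zeta|\le r$, which is continuous and vanishes on the boundary. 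Let $\zeta_n$ be a maximum point and $M_n$ the maximum value. Then $M_n\ge r\,g_n^{\#}(0)\to\infty$.
\item Set $t_n=1/g_n^{\#}(\zeta_n)$. Then $t_n\le (r-|\zeta_n|)/M_n$, so $t_n/(r-|\zeta_n|)\to0$.
\item Define $G_n(\eta)=g_n(\zeta_n+t_n\eta)$. The usual maximality estimate gives $G_n^{\#}(0)=1$ and, for $|\eta|<R$ and large $n$,
\[
G_n^{\#}(\eta)\le \frac{r-|\zeta_n|}{r-|\zeta_n|-t_nR}\to1 .
\]
\end{itemize}
By Lemma~\ref{lem:marty} a subsequence of $\{G_n\}$ converges locally uniformly to a harmonic mapping $G$.

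\emph{Identifying $G$ as non-constant.} We check $G^{\#}(0)=1$. This requires excluding $G\equiv\infty$ and passing the spherical derivatives to the limit. I expect this to be the main technical obstacle, since the harmonic Marty lemma (Lemma~\ref{lem:marty}) is only one-directional. I would handle it via the normalisation $g_n(0)=0$ for the co-analytic parts and Hurwitz/Weierstrass convergence of $h_n'$ and $g_n'$. To rule out convergence to $\infty$, I would use that $G_n^{\#}\le 2$ forces the oscillation of $G_n$ on $|\eta|<R$ to be controlled in the chordal metric.

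\emph{Returning to the required form.} Finally write $w_n=z_n+\zeta_n/\varphi(|z_n|)$ and
\[
\rho_n=t_n\,\frac{\varphi(|w_n|)}{\varphi(|z_n|)},
\]
so that $G_n(\eta)=f\big(w_n+\rho_n\eta/\varphi(|w_n|)\big)$ exactly. Since $|\zeta_n|\le r<1$, the smoothly increasing property (uniform on compacta) gives $\varphi(|w_n|)/\varphi(|z_n|)\to1$. Together with $t_n\le r/M_n\to0$, this yields $\rho_n\to0$ and $\rho_n\in(0,1)$ for large $n$, which completes the forward direction.
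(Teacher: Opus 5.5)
\paragraph{Where the proposal stands.} Your converse direction is correct. It is the same computation the paper uses in the proof of Theorem~\ref{thm:lappanfamily}: $F_n^{\#}(\eta_0)=\rho_n f^{\#}(z_n)/\varphi(|w_n|)\to F^{\#}(\eta_0)\neq0$, combined with $\varphi(|z_n|)/\varphi(|w_n|)\to1$. The only adjustment is that your compactness step needs $\varphi$ bounded \emph{below}, and the normalisation \eqref{eq:phinorm} supplies this.

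The paper gives no separate proof of the lemma; it quotes it from \cite{BDP}. Its own version of the forward direction is Theorem~\ref{thm:rescale} with $\beta=0$, proved via the Pang--Zalcman functional $\Gamma_n(t,w)$. Your route is a legitimate alternative: you pre-scale by $1/\varphi(|z_n|)$ to a fixed disc, apply the classical $(r-|\zeta|)g_n^{\#}(\zeta)$ maximisation, and re-centre using the smoothly increasing property. It even avoids the continuity of $\varphi$ that Theorem~\ref{thm:rescale} assumes. Everything up to $G_n^{\#}(0)=1$, $G_n^{\#}\le1+o(1)$ locally, and the formulas for $w_n,\rho_n$ is correct.

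\paragraph{The gap.} The genuine gap is the step you postpone: producing a \emph{finite}, non-constant harmonic limit. Your suggested mechanism cannot supply it. A bound on $G_n^{\#}$ gives equicontinuity into the Riemann sphere, but it is fully compatible with $G_n\to\infty$. For example, $G_n(\eta)=iT_n+(1+T_n^2)\Real\,\eta$ satisfies $G_n^{\#}\le G_n^{\#}(0)=1$ on all of $\C$, while $|G_n|\ge T_n\to\infty$.

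For meromorphic functions this degeneration is excluded by passing to $1/G_n$, which has the same spherical derivative. Harmonic mappings have no such symmetry. Weierstrass convergence of $h_n'$ and $g_n'$ presupposes Euclidean local boundedness, which is exactly what is at issue. Fixing $g_n(0)=0$ only removes an additive constant and does not help. The paper's proof of Theorem~\ref{thm:rescale} asserts the same passage to a harmonic limit with $F^{\#}(0)=1$ without justification, so it offers no repair.

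\paragraph{Why the step cannot be repaired.} In fact the forward implication fails in the stated generality, where sense-preservation is not assumed. Let $s=\frac{1+z}{1-z}$, $\sigma=\Real\, s$, $f=\Real(-ie^{s})+i\,\Real\, s$, and $\varphi(r)=(1-r)^{-2}$. This $f$ is harmonic but not sense-preserving.
\begin{itemize}
\item On $(0,1)$ one computes $f^{\#}/\varphi=2e^{\sigma}/(1+\sigma^2)\to\infty$, so $f$ is not $\varphi$-normal.
\item Every rescaled disc $\{w_n+\rho_n\eta/\varphi(|w_n|):|\eta|\le R\}$ has hyperbolic radius $O(\rho_n)$, so $\sigma$ is essentially constant on it.
\item If $\sigma(w_n)\to\infty$, then $|F_n|\ge\mathrm{Im}\,F_n=\sigma\to\infty$ there, so no finite limit exists.
\item If $\sigma(w_n)$ stays bounded, then $(1-|z|^2)f^{\#}(z)=2\sigma f^{\#}_s\le 2\sigma(e^{\sigma}+1)$ is bounded on the rescaled disc, where $f^{\#}_s$ denotes the spherical derivative of $f$ as a function of $s$. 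Hence $F_n^{\#}=O(\rho_n)\to0$, and any finite limit is constant.
\end{itemize}
So some additional hypothesis is needed to rule out degeneration to $\infty$. No argument of the kind you sketch can close this step.
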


\begin{lemma}[{\cite[p.~10]{Duren}}]\label{lem:hurwitz}
Let $\{f_{n}\}$ be a sequence of sense-preserving harmonic mappings in $\D$ converging
locally uniformly to a sense-preserving harmonic mapping $f$. Then $z_{0}\in\D$ is a zero of
$f$ if and only if $z_{0}$ is a cluster point of the zeros of $f_{n}$, $n\ge1$.
\end{lemma}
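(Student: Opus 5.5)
The plan is a Hurwitz-type argument. Local uniform convergence handles one direction directly. For the other direction, the argument principle for sense-preserving harmonic mappings stands in for the usual one for analytic functions. Throughout, ``$z_0$ is a cluster point of the zeros of $f_n$'' is read as: there are indices $n_k\to\infty$ and points $z_k$ with $f_{n_k}(z_k)=0$ and $z_k\to z_0$.

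\emph{Cluster points are zeros.} Suppose $f_{n_k}(z_k)=0$ and $z_k\to z_0\in\D$. Fix a closed disk $K\subset\D$ around $z_0$ that contains all $z_k$ for large $k$. Then
\[
|f(z_0)|\le |f(z_0)-f(z_k)|+|f(z_k)-f_{n_k}(z_k)|\le |f(z_0)-f(z_k)|+\sup_K|f-f_{n_k}|.
\]
The first term tends to $0$ by continuity of $f$, and the second by uniform convergence on $K$. Hence $f(z_0)=0$. Sense-preservation is not needed here.

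\emph{Zeros are cluster points.} Let $f(z_0)=0$. Since $f=h+\bar g$ is sense-preserving, $h'\neq 0$ locally and the dilatation $\omega=g'/h'$ is analytic with $|\omega|<1$. In particular $f$ is non-constant, and by the argument principle for sense-preserving harmonic mappings (Duren, \S2.3) its zeros are isolated and each has positive order. So for every sufficiently small $r>0$, $f$ has no zeros on the circle $C_r=\{|z-z_0|=r\}$, and the winding number of $f(C_r)$ about $0$ equals the number of zeros of $f$ inside $C_r$ counted with order, which is at least $1$.

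Put $m_r=\min_{C_r}|f|>0$. By uniform convergence, $|f_n-f|<m_r$ on $C_r$ for $n\ge N(r)$. The straight-line homotopy $f+t(f_n-f)$, $t\in[0,1]$, then never vanishes on $C_r$, so $f_n(C_r)$ has the same, positive, winding number about $0$. Since $f_n$ is continuous on the closed disk bounded by $C_r$, a nonzero winding number forces a zero of $f_n$ inside $C_r$; otherwise $C_r$ would be null-homotopic in $\C\setminus\{0\}$ through the images of concentric circles. Taking $r=1/j$ and $n_j\ge N(1/j)$ increasing yields zeros $z_j$ of $f_{n_j}$ with $|z_j-z_0|<1/j$, so $z_0$ is a cluster point.

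\emph{Main obstacle.} The delicate ingredient is the claim that zeros of a sense-preserving harmonic mapping are isolated and have strictly positive winding number, i.e.\ the harmonic argument principle. This is exactly where the sense-preserving hypothesis on the limit $f$ is used. I would cite it from Duren, not reprove it. If a proof were required, I would write $f$ near $z_0$ as $h(z)-h(z_0)+\overline{g(z)-g(z_0)}$. Since $|g'|<|h'|$, the analytic part dominates on small circles, and the winding number equals the order of the zero of $h-h(z_0)$ at $z_0$, which is at least $1$. Only continuity of the $f_n$ is used, so their own sense-preservation is not essential.
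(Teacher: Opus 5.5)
Your proof is correct and follows essentially the same route as the standard argument behind the result the paper cites from Duren (the paper gives no proof of its own): the direction from cluster points to zeros uses only continuity and local uniform convergence, and the converse uses the harmonic argument principle (an isolated zero of $f$ with positive winding number on small circles) together with a Rouch\'e-type straight-line homotopy. Your reading of ``cluster point'' as a limit of zeros $z_k$ of $f_{n_k}$ with $n_k\to\infty$ is the right one. Your remark that the second direction needs only continuity of the $f_n$ is also correct: a loop with nonzero winding number cannot be the boundary image of a zero-free continuous map of the disk. This makes your version a slight generalization of applying the argument principle to the $f_n$ themselves.
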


The next lemma is classical (see \cite{Hayman}); we include a proof for completeness.

\begin{lemma}\label{lem:entiretwo}
Let $f$ be a non-constant entire function. Then there are at most two values $a$ for which
all zeros of $f-a$ are multiple.
\end{lemma}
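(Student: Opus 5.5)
The proof will use the second main theorem of Nevanlinna theory. Alternatively, for a purely elementary route, one can use the standard ramification-counting form of it. Suppose, to the contrary, that $a_1,a_2,a_3$ are three distinct complex numbers such that every zero of $f-a_j$ is multiple ($j=1,2,3$). Here a value $a$ with $f-a$ zero-free counts as ``all zeros multiple'' vacuously. Since $f$ is entire, $\infty$ is a Picard exceptional value with $N(r,f)=0$. The aim is to show that three such finite values already contradict the second main theorem.

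The key step is the counting estimate. Each zero of $f-a_j$ has multiplicity at least $2$, so
\[
\overline N\!\left(r,\frac{1}{f-a_j}\right)\le \frac12\, N\!\left(r,\frac{1}{f-a_j}\right)\le \frac12\, T(r,f)+O(1),
\]
using the first main theorem. Apply the second main theorem with the four values $a_1,a_2,a_3,\infty$:
\[
2T(r,f)\le \overline N(r,f)+\sum_{j=1}^{3}\overline N\!\left(r,\frac{1}{f-a_j}\right)+S(r,f)\le 0+\frac32\,T(r,f)+S(r,f).
\]
This gives $\tfrac12 T(r,f)\le S(r,f)$. Since $S(r,f)=o(T(r,f))$ as $r\to\infty$ outside a set of finite linear measure, this is impossible for non-constant $f$.

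The only delicate point is the error term in the case where $f$ is a polynomial, where $T(r,f)=O(\log r)$. I would handle polynomials separately and directly. If $f$ has degree $d\ge1$, then $f'$ has exactly $d-1$ zeros counted with multiplicity. If all zeros of $f-a_j$ are multiple, then $f-a_j$ has at most $d/2$ distinct zeros. Each zero of multiplicity $m$ contributes $m-1$ zeros of $f'$, so $f-a_j$ contributes at least $d-d/2=d/2$ zeros of $f'$. The values $a_j$ are distinct, so these zeros of $f'$ are distinct points. Summing over three values gives $3d/2\le d-1$, which is a contradiction. For transcendental $f$, $\log r=o(T(r,f))$, and the Nevanlinna argument above applies cleanly.

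I expect no real obstacle here. The lemma is classical, and the main care is to state precisely which version of the second main theorem is invoked (for example \cite{Hayman}) and to note that the exceptional set does not matter.
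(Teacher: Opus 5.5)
Your proposal is correct and is essentially the paper's proof: the second main theorem with $\overline N(r,f)=0$ and $\overline N(r,1/(f-a_j))\le\frac12 T(r,f)+O(1)$ for three values, giving $\frac12 T(r,f)\le S(r,f)$. Your separate count for polynomials (where $f'$ would need at least $3d/2$ zeros but has only $d-1$) is valid but not needed: for rational $f$ the error term in the second main theorem is $O(1)$ while $T(r,f)=d\log r+O(1)\to\infty$, which is why the paper does not split into cases.
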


\begin{proof}
We use the second fundamental theorem of Nevanlinna. Suppose, to the contrary, that there
exist three distinct values $a_{1},a_{2},a_{3}\in\C$ such that all zeros of $f-a_{j}$
($j=1,2,3$) have multiplicity at least $2$. Since $f$ is entire, $\overline N(r,f)=0$, and
since every zero of $f-a_{j}$ is multiple,
$$\overline N\!\big(r,\tfrac{1}{f-a_{j}}\big)\le\tfrac12 N\!\big(r,\tfrac{1}{f-a_{j}}\big)
\le\tfrac12 T(r,f)+O(1).$$ Hence
\[
2T(r,f)\le \overline N(r,f)+\sum_{j=1}^{3}\overline N\!\left(r,\frac{1}{f-a_{j}}\right)+S(r,f)
\le \frac32\,T(r,f)+S(r,f),
\]
so that $\tfrac12 T(r,f)\le S(r,f)$, which is impossible. Hence the proof.
\end{proof}

The following sharpens \cite[Lemma~3]{DPQ}, where four values appear in place of two.

\begin{lemma}\label{lem:harmtwo}
Let $f=h+\bar g$ be a sense-preserving harmonic mapping in $\C$ with $g(0)=0$. Then there
are at most two values $a$ for which all zeros of $f-a$ are multiple.
\end{lemma}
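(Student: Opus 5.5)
The plan is to reduce the statement to Lemma~\ref{lem:entiretwo}. The key observation is that a sense-preserving harmonic mapping of the whole plane is essentially a real-linear image of an entire function.

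\emph{Step 1: the dilatation is constant.} Since $f=h+\bar g$ is sense-preserving in $\C$, $h$ is non-constant and $|g'(z)|<|h'(z)|$ wherever $h'(z)\neq 0$. I will take the standard convention $|g'|\le|h'|$ on all of $\C$, with $h'\not\equiv 0$. Then $\omega=g'/h'$ is meromorphic in $\C$ and satisfies $|\omega|\le 1$ off the zeros of $h'$. Hence every zero of $h'$ is a removable singularity of $\omega$, so $\omega$ is a bounded entire function. By Liouville's theorem $\omega\equiv c$ for a constant $c$.

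This forces $|c|<1$. Indeed, $|c|=1$ would give $|g'|=|h'|$ identically, so the Jacobian $J_f=|h'|^2-|g'|^2$ would vanish everywhere, which is impossible for a sense-preserving map. Thus $g'=c\,h'$. Using $g(0)=0$ we get $g=c\,(h-h(0))$, and therefore
\[
f=h+\bar c\,\bar h-\bar c\,\overline{h(0)}=T(h)-\bar c\,\overline{h(0)},\qquad T(w):=w+\bar c\,\bar w .
\]

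\emph{Step 2: transfer zeros and multiplicities.} Because $|c|<1$, the map $T$ is an $\mathbb{R}$-linear bijection of $\C$ with positive determinant $1-|c|^2$. It is therefore an orientation-preserving homeomorphism. For $a\in\C$ put $a^{*}=T^{-1}\big(a+\bar c\,\overline{h(0)}\big)$. Then $f-a=T(h-a^{*})$, so the zeros of $f-a$ coincide with the zeros of $h-a^{*}$.

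Multiplicities also agree, in either of the usual senses.
\begin{itemize}
\item If a multiple zero means a zero at which the Jacobian vanishes, note that $J_f=(1-|c|^2)|h'|^2$. So $J_f(z_0)=0$ exactly when $h'(z_0)=0$.
\item If multiplicity means the local topological degree, it is unchanged under post-composition with the orientation-preserving homeomorphism $T$.
\end{itemize}
In both cases all zeros of $f-a$ are multiple if and only if all zeros of $h-a^{*}$ are multiple.

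\emph{Step 3: conclude.} The map $a\mapsto a^{*}$ is injective, being an affine bijection of $\C$. Hence three distinct values $a$ with only multiple zeros of $f-a$ would produce three distinct values $a^{*}$ with only multiple zeros of $h-a^{*}$. Since $h$ is a non-constant entire function, this contradicts Lemma~\ref{lem:entiretwo}.

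The only delicate point is Step 1. There one must justify that $\omega$ extends across the zeros of $h'$, and that the convention for "sense-preserving" rules out $h'\equiv 0$ and $|c|=1$. I would state the convention ($J_f>0$ away from isolated points, with dilatation $\omega=g'/h'$ analytic and $|\omega|<1$) explicitly at the start of the proof.
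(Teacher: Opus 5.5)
Your proof is correct and follows essentially the same route as the paper. Liouville's theorem makes the dilatation $g'/h'$ a constant $c$ with $|c|<1$, and $g=c(h-h(0))$ gives $f-a=T(h-a^{*})$ with $T(w)=w+\bar c\,\bar w$, so zeros and multiplicities pass to $h-a^{*}$ and Lemma~\ref{lem:entiretwo} finishes; your extra care with the removable singularities of $g'/h'$ at zeros of $h'$, the exclusion of $|c|=1$, and the notion of multiplicity fills in points the paper passes over quickly.
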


\begin{proof}
Since $f$ is sense-preserving in $\C$, we have $|g'(z)/h'(z)|<1$ in $\C$; as $g'/h'$ is then
a bounded entire function, Liouville's theorem gives $\omega(z)=g'(z)/h'(z)\equiv c$ with
$c\in\C$, $|c|<1$. Integrating $g'=c\,h'$ with $g(0)=0$ yields $g(z)=c\big(h(z)-h(0)\big)$,
whence
\[
f(z)=h(z)+\overline{c\,h(z)-c\,h(0)}.
\]
As in \cite[Lemma~2.5]{BDP}, for any $a\in\C$ the equation $f(z)=a$ is equivalent to
\begin{equation}\label{eq:correspond}
h(z)=a^{*}:=\frac{a-\overline{c\,a}+\overline{c\,h(0)}-|c|^{2}h(0)}{1-|c|^{2}}.
\end{equation}
Moreover $f(z)-a=u(z)+\bar c\,\overline{u(z)}$ with $u=h-a^{*}$, so $z_{0}$ is a zero of
$f-a$ of order $p$ if and only if it is a zero of $h-a^{*}$ of order $p$; in particular the
multiplicities coincide. Since $a\mapsto a^{*}$ is a bijection of $\C$, and since by
Lemma~\ref{lem:entiretwo} there are at most two values $a^{*}$ for which every zero of
$h-a^{*}$ is multiple, there are at most two values $a$ for which every zero of $f-a$ is
multiple.
\end{proof}

%We shall also use the identity theorem for harmonic mappings: if a harmonic mapping vanishes
%on a non-empty open subset of a domain, it vanishes identically there.

\section{Proofs of the main results}

\begin{proof}[Proof of Theorem~\textup{\ref{thm:lowerbound}}]
Suppose, to the contrary, that $f$ is not normal. By Lemma~\ref{lem:resnormal} (with
$\beta=2$), there exist sequences $\{w_{n}\}\subset\D$ and $\{\rho_{n}\}\subset(0,1)$ with
$|w_{n}|\to1^{-}$ and $\rho_{n}\to0$ such that
\[
F_{n}(\eta)=\rho_{n}^{2}f(w_{n}+\rho_{n}\eta)
=\rho_{n}^{2}\big(h(w_{n}+\rho_{n}\eta)+\overline{g(w_{n}+\rho_{n}\eta)}\big)
\]
converges locally uniformly in $\C$ to a non-constant harmonic mapping
$F(\eta)=H(\eta)+\overline{G(\eta)}$ with $F^{\#}(\eta)\le1$ and $F^{\#}(0)=1$. Write
$H_{n}(\eta):=\rho_{n}^{2}h(w_{n}+\rho_{n}\eta)$ and
$G_{n}(\eta):=\rho_{n}^{2}g(w_{n}+\rho_{n}\eta)$. Since $F_{n}\to F$ locally uniformly, for
every positive integer $k$ we have $H_{n}^{(k)}\to H^{(k)}$ and $G_{n}^{(k)}\to G^{(k)}$
locally uniformly. As $F$ is non-constant there is $\eta_{0}$ with $F(\eta_{0})\neq0$.

Put $\zeta_{n}=w_{n}+\rho_{n}\eta_{0}$. Using
$H_{n}^{(k)}(\eta_{0})=\rho_{n}^{k+2}h^{(k)}(\zeta_{n})$,
$G_{n}^{(k)}(\eta_{0})=\rho_{n}^{k+2}g^{(k)}(\zeta_{n})$ and
$|F_{n}(\eta_{0})|^{k+1}=\rho_{n}^{2(k+1)}|f(\zeta_{n})|^{k+1}$, we estimate
\begin{align*}
|H_{n}^{(k)}(\eta_{0})|+|G_{n}^{(k)}(\eta_{0})|
&\ge\frac{|H_{n}^{(k)}(\eta_{0})|+|G_{n}^{(k)}(\eta_{0})|}{1+|F_{n}(\eta_{0})|^{k+1}}
=\frac{\rho_{n}^{k+2}\big(|h^{(k)}(\zeta_{n})|+|g^{(k)}(\zeta_{n})|\big)}
{1+\rho_{n}^{2(k+1)}|f(\zeta_{n})|^{k+1}}\\[0.4ex]
&=\frac{|h^{(k)}(\zeta_{n})|+|g^{(k)}(\zeta_{n})|}{\rho_{n}^{k}\,|f(\zeta_{n})|^{k+1}}\cdot
\frac{|F_{n}(\eta_{0})|^{k+1}}{1+|F_{n}(\eta_{0})|^{k+1}}\\[0.4ex]
&\ge\frac{|h^{(k)}(\zeta_{n})|+|g^{(k)}(\zeta_{n})|}{\rho_{n}^{k}\big(1+|f(\zeta_{n})|^{k+1}\big)}\cdot
\frac{|F_{n}(\eta_{0})|^{k+1}}{1+|F_{n}(\eta_{0})|^{k+1}}\\[0.4ex]
&>\frac{M}{\rho_{n}^{k}}\cdot\frac{|F_{n}(\eta_{0})|^{k+1}}{1+|F_{n}(\eta_{0})|^{k+1}},
\end{align*}
where the last inequality uses the hypothesis \eqref{eq:lowerbound}, i.e.\
$\big(|h^{(k)}|+|g^{(k)}|\big)/(1+|f|^{k+1})>M$. Since $\rho_{n}\to0$ and
$|F_{n}(\eta_{0})|\to|F(\eta_{0})|\neq0$, the right-hand side tends to $\infty$. But the
left-hand side converges to $|H^{(k)}(\eta_{0})|+|G^{(k)}(\eta_{0})|<\infty$, a
contradiction. Hence $f$ is normal.
\end{proof}

\begin{proof}[Proof of Theorem~\textup{\ref{thm:rescale}}]
Suppose $f$ is not $\varphi$-normal. Then there is a sequence $\{w_{n}'\}\subset\D$ with
$|w_{n}'|\to1^{-}$ and
\begin{equation}\label{eq:p1}
\frac{f^{\#}(w_{n}')}{\varphi(|w_{n}'|)}\longrightarrow\infty.
\end{equation}
By the continuity of $\varphi$ we may choose $r_{n}\in(0,1)$ with $|w_{n}'|<r_{n}<1$ and
\begin{equation}\label{eq:p2}
\frac{f^{\#}(w_{n}')}{\varphi\big(|w_{n}'|/r_{n}\big)}\longrightarrow\infty.
\end{equation}
For $t\in(0,1]$ and $|w|<r_{n}$ define
\[
\Gamma_{n}(t,w):=\frac{1}{\varphi\big(|w|/r_{n}\big)^{\,1+\beta}}\cdot
\frac{t^{\,1+\beta}\big(1+|f(w)|^{2}\big)f^{\#}(w)}
{1+\dfrac{t^{2\beta}|f(w)|^{2}}{\varphi(|w|/r_{n})^{2\beta}}}.
\]
Then $\Gamma_{n}$ is continuous on $(0,1]\times\{|w|<r_{n}\}$, and since $\beta>-1$,
\begin{equation}\label{eq:p3}
\lim_{t\to0}\Gamma_{n}(t,w)=0.
\end{equation}

\smallskip
\noindent\textbf{Claim 1.}
$\displaystyle \Gamma_{n}(t,w)\ge\frac{t^{\,1+|\beta|}f^{\#}(w)}{\varphi(|w|/r_{n})^{\,1+|\beta|}}.$

\smallskip
\noindent\emph{Case $\beta\ge0$.} By \eqref{eq:phinorm} we have $\varphi(|w|/r_{n})\ge1$,
and since $t\le1$ this gives $t^{2\beta}/\varphi(|w|/r_{n})^{2\beta}\le1$. Hence the
denominator is at most $1+|f(w)|^{2}$, and
\[
\Gamma_{n}(t,w)\ge\frac{1}{\varphi(|w|/r_{n})^{1+\beta}}\cdot
\frac{t^{1+\beta}\big(1+|f(w)|^{2}\big)f^{\#}(w)}{1+|f(w)|^{2}}
=\frac{t^{1+\beta}f^{\#}(w)}{\varphi(|w|/r_{n})^{1+\beta}}.
\]
\emph{Case $\beta<0$.} Then $|\beta|=-\beta>0$, and the same computation with $|\beta|$ in
place of $\beta$ yields the claim. This proves Claim~1.

\smallskip
By \eqref{eq:p2} and Claim~1,
\begin{equation}\label{eq:p4}
\Gamma_{n}(1,w_{n}')\ge\frac{f^{\#}(w_{n}')}{\varphi(|w_{n}'|/r_{n})^{\,1+|\beta|}}
\longrightarrow\infty.
\end{equation}
Thus $\sup_{|w|<r_{n}}\Gamma_{n}(1,w)\ge\Gamma_{n}(1,w_{n}')>1$ for large $n$, while by
\eqref{eq:p3} $\sup_{|w|<r_{n}}\Gamma_{n}(t,w)<1$ for $t$ small. By continuity there exist
$t_{n}\in(0,1)$ and $w_{n}$ with $|w_{n}|<r_{n}$ such that
\begin{equation}\label{eq:p5}
\sup_{|w|<r_{n}}\Gamma_{n}(t_{n},w)=\Gamma_{n}(t_{n},w_{n})=1.
\end{equation}
Combining \eqref{eq:p5} with Claim~1,
\[
1=\Gamma_{n}(t_{n},w_{n})\ge\Gamma_{n}(t_{n},w_{n}')\ge
\frac{t_{n}^{\,1+|\beta|}f^{\#}(w_{n}')}{\varphi(|w_{n}'|/r_{n})^{\,1+|\beta|}},
\]
which together with \eqref{eq:p4} forces $t_{n}\to0$.

Set $\rho_{n}=\dfrac{t_{n}\varphi(|w_{n}|)}{\varphi(|w_{n}|/r_{n})}$. Since $\varphi$ is
increasing and $|w_{n}|<|w_{n}|/r_{n}$, we have $\rho_{n}<t_{n}\to0$. Define
\[
F_{n}(\eta)=\left(\frac{\rho_{n}}{\varphi(|w_{n}|)}\right)^{\beta}
f\!\left(w_{n}+\frac{\rho_{n}\eta}{\varphi(|w_{n}|)}\right),
\qquad |\eta|<R_{n}=\frac{(1-|w_{n}|)\varphi(|w_{n}|)}{\rho_{n}}.
\]
A direct computation gives
\begin{equation}\label{eq:p7}
F_{n}^{\#}(\eta)=\left(\frac{\rho_{n}}{\varphi(|w_{n}|)}\right)^{1+\beta}
\frac{\Big(1+\big|f(w_{n}+\tfrac{\rho_{n}\eta}{\varphi(|w_{n}|)})\big|^{2}\Big)
f^{\#}\!\big(w_{n}+\tfrac{\rho_{n}\eta}{\varphi(|w_{n}|)}\big)}
{1+\big(\tfrac{\rho_{n}}{\varphi(|w_{n}|)}\big)^{2\beta}
\big|f(w_{n}+\tfrac{\rho_{n}\eta}{\varphi(|w_{n}|)})\big|^{2}},
\end{equation}
and in particular, by \eqref{eq:p5},
\begin{equation}\label{eq:p8}
F_{n}^{\#}(0)=\left(\frac{\rho_{n}}{\varphi(|w_{n}|)}\right)^{1+\beta}
\frac{(1+|f(w_{n})|^{2})f^{\#}(w_{n})}
{1+\big(\tfrac{\rho_{n}}{\varphi(|w_{n}|)}\big)^{2\beta}|f(w_{n})|^{2}}
=\Gamma_{n}(t_{n},w_{n})=1.
\end{equation}

\smallskip
\noindent\textbf{Claim 2.} $\{F_{n}\}$ is normal.

\smallskip
Fix $R>0$ and let $|\eta|\le R$. By the smoothly increasing property,
\[
\frac{\varphi\!\Big(\big|\tfrac{w_{n}}{r_{n}}+\tfrac{\rho_{n}\eta}
{\varphi(|w_{n}|/r_{n})}\big|\Big)}{\varphi(|w_{n}|/r_{n})}\longrightarrow1
\]
uniformly for $|\eta|\le R$, so there is a sequence $\delta_{n}\to0^{+}$ with, for all large
$n$,
\begin{equation}\label{eq:p9}
\frac{(1-\delta_{n})t_{n}}
{\varphi\!\big(\big|\tfrac{w_{n}}{r_{n}}+\tfrac{\rho_{n}\eta}{\varphi(|w_{n}|/r_{n})}\big|\big)}
\le\frac{\rho_{n}}{\varphi(|w_{n}|)}
\le\frac{(1+\delta_{n})t_{n}}
{\varphi\!\big(\big|\tfrac{w_{n}}{r_{n}}+\tfrac{\rho_{n}\eta}{\varphi(|w_{n}|/r_{n})}\big|\big)}.
\end{equation}
Substituting \eqref{eq:p9} into \eqref{eq:p7} and bounding the denominator from below,
\[
F_{n}^{\#}(\eta)\le\frac{(1+\delta_{n})^{1+\beta}}{(1-\delta_{n})^{2\beta}}
\longrightarrow1\qquad(n\to\infty),
\]
uniformly for $|\eta|\le R$. Hence $\{F_{n}^{\#}\}$ is locally uniformly bounded and, by
Lemma~\ref{lem:marty}, the family $\{F_{n}\}$ is normal. This proves Claim~2.

\smallskip
Passing to a subsequence, $F_{n}\to F$ locally uniformly in $\C$ for some harmonic mapping
$F$. By \eqref{eq:p8} and Claim~2, $F^{\#}(\eta)\le1$ and $F^{\#}(0)=1$; in particular
$F^{\#}(0)\neq0$, so $F$ is non-constant. This completes the proof.
\end{proof}

\begin{remark}\label{rem:converse}
Only the implication proved in Theorem~\ref{thm:rescale} is used below. The converse
(``existence of a non-constant rescaled limit implies non-$\varphi$-normality'') holds for
$\beta=0$ and is precisely Lemma~\ref{lem:phizalcman}; for $\beta\ge1$ it may fail, since the bound of the rescaled spherical derivative carries a factor of order
$\rho_{n}^{\,1-\beta}\varphi(|w_{n}|)^{\beta}$ which may tend to $\infty.$  However, the converse holds for $\beta\leq 0.$ 
\end{remark}

\begin{proof}[Proof of Theorem~\textup{\ref{thm:phisuff}}]
Suppose, to the contrary, that $f$ is not $\varphi$-normal. Applying
Theorem~\ref{thm:rescale} with $\beta=\dfrac{k}{\alpha-1}+1\;(>1)$, there exist sequences
$\{w_{n}\}\subset\D$ and $\{\rho_{n}\}\subset(0,1)$ with $\rho_{n}\to0$ such that
\[
F_{n}(\eta)=\left(\frac{\rho_{n}}{\varphi(|w_{n}|)}\right)^{\frac{k}{\alpha-1}+1}
f\!\left(w_{n}+\frac{\rho_{n}\eta}{\varphi(|w_{n}|)}\right)
=H_{n}(\eta)+\overline{G_{n}(\eta)}
\]
converges locally uniformly in $\C$ to a non-constant harmonic mapping
$F=H+\overline G$ with $F^{\#}\le1$ and $F^{\#}(0)=1$, where
\[
H_{n}(\eta)=\left(\frac{\rho_{n}}{\varphi(|w_{n}|)}\right)^{\frac{k}{\alpha-1}+1}
h\!\left(w_{n}+\frac{\rho_{n}\eta}{\varphi(|w_{n}|)}\right),\quad
G_{n}(\eta)=\left(\frac{\rho_{n}}{\varphi(|w_{n}|)}\right)^{\frac{k}{\alpha-1}+1}
g\!\left(w_{n}+\frac{\rho_{n}\eta}{\varphi(|w_{n}|)}\right).
\]
Then $H_{n}^{(k)}\to H^{(k)}$ and $G_{n}^{(k)}\to G^{(k)}$ locally uniformly, and there is
$\eta_{0}$ with $F(\eta_{0})\neq0$. Put $w_{n}^{*}=w_{n}+\tfrac{\rho_{n}}{\varphi(|w_{n}|)}\eta_{0}$.
Writing $c_{n}=\rho_{n}/\varphi(|w_{n}|)$ and using
$H_{n}^{(k)}(\eta_{0})=c_{n}^{\,\frac{k}{\alpha-1}+1+k}h^{(k)}(w_{n}^{*})$ (similarly for
$G_{n}$) and $|F_{n}(\eta_{0})|^{\alpha}=c_{n}^{\,(\frac{k}{\alpha-1}+1)\alpha}|f(w_{n}^{*})|^{\alpha}$,
\begin{align*}
|H_{n}^{(k)}(\eta_{0})|+|G_{n}^{(k)}(\eta_{0})|
&\ge\frac{c_{n}^{\,\frac{k}{\alpha-1}+1+k}\big(|h^{(k)}(w_{n}^{*})|+|g^{(k)}(w_{n}^{*})|\big)}
{1+|F_{n}(\eta_{0})|^{\alpha}}\\[0.4ex]
&=\frac{|h^{(k)}(w_{n}^{*})|+|g^{(k)}(w_{n}^{*})|}{c_{n}^{\,\alpha-1}\,|f(w_{n}^{*})|^{\alpha}}\cdot
\frac{|F_{n}(\eta_{0})|^{\alpha}}{1+|F_{n}(\eta_{0})|^{\alpha}}\\[0.4ex]
&\ge\frac{|h^{(k)}(w_{n}^{*})|+|g^{(k)}(w_{n}^{*})|}
{c_{n}^{\,\alpha-1}\big(1+|f(w_{n}^{*})|^{\alpha}\big)}\cdot
\frac{|F_{n}(\eta_{0})|^{\alpha}}{1+|F_{n}(\eta_{0})|^{\alpha}}\\[0.4ex]
&>\frac{M}{\varphi(|w_{n}^{*}|)^{\alpha-1}\,c_{n}^{\,\alpha-1}}\cdot
\frac{|F_{n}(\eta_{0})|^{\alpha}}{1+|F_{n}(\eta_{0})|^{\alpha}}\\[0.4ex]
&=\frac{M}{\big(\varphi(|w_{n}^{*}|)/\varphi(|w_{n}|)\big)^{\alpha-1}\rho_{n}^{\,\alpha-1}}\cdot
\frac{|F_{n}(\eta_{0})|^{\alpha}}{1+|F_{n}(\eta_{0})|^{\alpha}},
\end{align*}
where we used the hypothesis \eqref{eq:phisuff} at $z=w_{n}^{*}$ and
$c_{n}^{\,\alpha-1}\varphi(|w_{n}^{*}|)^{\alpha-1}
=\rho_{n}^{\,\alpha-1}\big(\varphi(|w_{n}^{*}|)/\varphi(|w_{n}|)\big)^{\alpha-1}$. Since
$\varphi(|w_{n}^{*}|)/\varphi(|w_{n}|)\to1$, $\rho_{n}\to0$ and
$|F_{n}(\eta_{0})|\to|F(\eta_{0})|\neq0$, the right-hand side tends to $\infty$, while the
left-hand side tends to $|H^{(k)}(\eta_{0})|+|G^{(k)}(\eta_{0})|<\infty$, a contradiction.
Hence $f$ is $\varphi$-normal.
\end{proof}

\begin{proof}[Proof of Corollary~\textup{\ref{cor:phisuff}}]
Apply Theorem~\ref{thm:phisuff} with $\alpha=k+1$: then $\alpha-1=k$ and
\eqref{eq:corphisuff} is exactly \eqref{eq:phisuff}.
\end{proof}

\begin{proof}[Proof of Theorem~\textup{\ref{thm:rescalefamily}}]
The argument is identical to that of Theorem~\ref{thm:rescale}, with $f$ replaced by the
members $f_{n}$ of $\mathcal F$. Concretely, since $\mathcal F$ is not $\varphi$-normal there
exist $r'\in(0,1)$, $\{w_{n}'\}\subset\{|w|\le r'\}$ and $\{f_{n}\}\subset\mathcal F$ with
$f_{n}^{\#}(w_{n}')/\varphi(|w_{n}'|)\to\infty$; choosing $r_{n}\in(0,1)$ with
$|w_{n}'|<r_{n}<r'$ and replacing $f^{\#}$, $f$ by $f_{n}^{\#}$, $f_{n}$ throughout the
definition of $\Gamma_{n}$, Claims~1 and~2 and the construction of $w_{n},\rho_{n}$ go
through verbatim, producing the asserted limit $F$ with $F^{\#}\le F^{\#}(0)=1$.
\end{proof}

\begin{proof}[Proof of Theorem~\textup{\ref{thm:lappanfamily}}]
Suppose $\mathcal F$ is not $\varphi$-normal in $\D$. By Theorem~\ref{thm:rescalefamily}
(with $\beta=0$) there exist $\{w_{n}\}$, $\{\rho_{n}\}\subset(0,1)$ with $\rho_{n}\to0$ and
$\{f_{n}\}\subset\mathcal F$ such that
\[
F_{n}(\eta)=f_{n}\!\left(w_{n}+\frac{\rho_{n}}{\varphi(|w_{n}|)}\eta\right)\longrightarrow F(\eta)
\]
locally uniformly in $\C$, where $F$ is a non-constant sense-preserving harmonic mapping;
since $g(0)=0$ is a value-independent normalization, Lemma~\ref{lem:harmtwo} applies to $F$.

Let $b\in\C$ be such that $F-b$ has a simple zero at some $\eta_{0}$, i.e.\
$F^{\#}(\eta_{0})\neq0$. By Lemma~\ref{lem:hurwitz} there exist $\eta_{n}\to\eta_{0}$ with
$F_{n}(\eta_{n})=f_{n}(w_{n}^{*})=b$ for all large $n$, where
$w_{n}^{*}=w_{n}+\tfrac{\rho_{n}}{\varphi(|w_{n}|)}\eta_{n}$. Since $F_{n}\to F$ locally
uniformly,
\[
F_{n}^{\#}(\eta_{n})=\frac{\rho_{n}}{\varphi(|w_{n}|)}\,f_{n}^{\#}(w_{n}^{*})
\longrightarrow F^{\#}(\eta_{0})\neq0,
\]
and therefore, using $\varphi(|w_{n}^{*}|)/\varphi(|w_{n}|)\to1$ and $\rho_{n}\to0$,
\[
\frac{f_{n}^{\#}(w_{n}^{*})}{\varphi(|w_{n}^{*}|)}
=\frac{\varphi(|w_{n}|)}{\rho_{n}}\cdot\frac{F_{n}^{\#}(\eta_{n})}{\varphi(|w_{n}^{*}|)}
\longrightarrow\infty.
\]
Thus, whenever $b$ is a simple value of $F$,
\begin{equation}\label{eq:lappanblow}
\sup_{w\in f_{n}^{-1}(\{b\})}\frac{f_{n}^{\#}(w)}{\varphi(|w|)}=\infty.
\end{equation}
By Lemma~\ref{lem:harmtwo} there are at most two values $b$ for which every zero of $F-b$ is
multiple. Hence, among the three distinct numbers of $E$, at least one, say $b_{0}$, is a
simple value of $F$, and \eqref{eq:lappanblow} for $b_{0}$ contradicts the hypothesis
\[
\sup\left\{\frac{f^{\#}(w)}{\varphi(|w|)}:w\in f^{-1}(E),\ f\in\mathcal F\right\}<\infty.
\]
Therefore $\mathcal F$ is $\varphi$-normal in $\D$.
\end{proof}

\begin{proof}[Proof of Theorem~\textup{\ref{thm:lappansingle}}]
The proof is the single-mapping specialization of that of
Theorem~\ref{thm:lappanfamily}, using Lemma~\ref{lem:phizalcman} (the $\beta=0$ rescaling
for a single mapping) in place of Theorem~\ref{thm:rescalefamily}.
\end{proof}

\section*{Acknowledgements}
The second author gratefully acknowledges the Department of Science and Technology (DST),
Government of India, for financial support through the DST-INSPIRE Fellowship\\
(No.\ DST/INSPIRE/03/2022/005759: IF~220689).

\medskip
\noindent\textbf{Conflict of interest.} The authors declare that there are no conflicts of
interest regarding the publication of this paper.

\medskip
\noindent\textbf{Data availability.} Data sharing is not applicable, as the article is
purely theoretical.

\bigskip
\noindent
Kuldeep Singh Charak, Department of Mathematics, University of Jammu, Jammu-180006, India.\\
\emph{Email:} \texttt{kscharak7@rediffmail.com}

\medskip
\noindent
Pratiksha, Department of Mathematics, University of Jammu, Jammu-180006, India.\\
\emph{Email:} \texttt{pratikshapakhetra1999@gmail.com}

\medskip
\noindent
Nikhil Bharti, Higher Education Department, Government of Jammu and Kashmir, India;\\
Department of Mathematics, Government Degree College Basohli, Basohli-184201, Kathua,
Jammu and Kashmir, India.\\
\emph{Email:} \texttt{nikhilbharti94@gmail.com}

\end{document}